\numberwithin{equation}{section}
\begin{document}
\newtheorem{theorem}{Theorem}[section]
\newtheorem{corollary}[theorem]{Corollary}
\newtheorem{proposition}[theorem]{Proposition}
\newtheorem{remark}[theorem]{Remark}

\newcommand{\s}{\sum_{n=0}^{\infty}}
\newcommand{\su}{\sum_{n=1}^{\infty}}
\newcommand{\D}{\mathcal{D}}

\def\square{\hfill${\vcenter{\vbox{\hrule height.4pt \hbox{\vrule width.4pt
height7pt \kern7pt \vrule width.4pt} \hrule height.4pt}}}$}

\title[]
{\small Two $q$-operational equations and Hahn polynomials}

\author{Jing Gu$^{1}$}

\address{Jing Gu$^{1}$, $^{1}$School of Mathematical Sciences, East China Normal University, Shanghai, 200241, China}

\email{52215500005@stu.ecnu.edu.cn}

\author{DunKun Yang$^{1}$}

\address{DunKun Yang$^{1}$, $^{1}$School of Mathematical Sciences, East China Normal University, Shanghai, 200241, China}

\email{52265500003@stu.ecnu.edu.cn}

\author{Qi Bao$^{1,\ast}$}

\address{Qi Bao$^{1}$, $^{1}$School of Mathematical Sciences, East China Normal University, Shanghai, 200241, China}

\email{52205500010@stu.ecnu.edu.cn}

\subjclass[2010]{05A30, 33D90}

\keywords{Basic hypergeometric series; $q$-derivative; $q$-operational equation; Hahn polynomials; $q$-Gaussian summation}

\thanks{$^{\ast}$Corresponding author. Email address: 52205500010@stu.ecnu.edu.cn}

\begin{abstract}
Motivated by Liu's recent work in \cite{Liu2022}. We shall reveal the essential feature of Hahn polynomials by presenting two new $q$-exponential operators. These lead us to use a systematic method to study identities involving  Hahn polynomials. As applications, we use the method of $q$-exponential operator to prove the bilinear generating function of Hahn polynomials and Heine's second transformation formula. Moreover, a generalization of $q$-Gaussian summation is given, too.
\end{abstract}

\maketitle
\section{Introduction}
The theory of basic hypergeometric series (or $q$-series) is one of the important branches of mathematics. It is well known that it has applications in various branches, such as combinatorial mathematics, number theory, computational algebra, orthogonal polynomial theory, difference equations, algebraic geometry, Lie algebra, Lie groups, statistics and physics, see \cite{A1986,F1988}. After nearly a hundred years of systematic development, mathematicians have studied it using various methods, including the Wilf-Zeilberg algorithm, transformation, inversion, operator and $q$-partial differential equation (cf. \cite{WZ1992,Chu1993,GR,VS1999,AA1993,Liu2013,CZA,AHD1977,A1970,GS1983}). In 1997, Chen and Liu \cite{ChenLiu1997,ChenLiu1998} first used the method of $q$-exponential operators to study basic hypergeometric series. Many classical $q$-series results fall into this theory, such as the Askey-Wilson integral, the Nassrallah-Rahman integral, the $q$-integral forms of Sears transformation, and Gasper's formula of the extension of the Askey-Roy integral, etc. Meanwhile, many new $q$-series identities are obtained. Since then, more and more mathematicians have used this powerful method to study basic hypergeometric series, see \cite{Zhang2020,A2016,LZ2015,SA2016,CZ2014,Jia2014,Cao2014,Cao2014a,Liu2011,
ZY2010,Lu2009,CG2008,F2007,ZW2005,CSS} for more details.

Now let's give some standard notations (cf. \cite{GR}). Let $q$ be a complex number such that $|q|<1$. For any complex number $a$, the $q$-shifted factorials $(a;q)_n$ are defined by
\begin{align*}
(a;q)_0=1, \,\, (a;q)_n=\prod_{k=0}^{n-1}(1-aq^k), \,\,
(a;q)_{\infty}=\prod_{k=0}^{\infty}(1-aq^k).
\end{align*}
For convenience, we also adopt the following compact notation for the multiple $q$-shifted factorial
\begin{align*}
(a_1,a_2,\cdots,a_m;q)_n=(a_1;q)_n(a_2;q)_n\cdots (a_m;q)_n,
\end{align*}
where $n$ is a nonnegative integer or positive infinity. The basic hypergeometric series or $q$-hypergeometric series ${}_r\phi_s(\cdot)$ are defined by
\begin{align}\label{rs}
{}_r\phi_s \left( \begin{gathered}
a_1, a_2, \cdots , a_r \\ b_1, b_2, \cdots , b_s \end{gathered}
;\,q, z \right)=\s \frac{ (a_1,  \cdots a_r;q)_n  }
{ (q, b_1, \cdots b_s;q)_n}
\left[ (-1)^n q^{ \binom n2 } \right]^{1+s-r} z^n.
\end{align}
Here and in what follows, $\binom n2$ stands for composite symbol. For any function $f(x)$ of one variable, the $q$-derivative of $f(x)$ with respect to $x$ is defined as $\D \{f(x)\}=(f(x)-f(qx))/x$. A $q$-partial derivative of a function of several variables is its
$q$-derivative with respect to one of those variables, regarding other variables as constants (cf. \cite{Liu2013}). For convenience, the $q$-partial derivative of a function $f$ with respect to the variable $x$ is denoted by $\D_x\{f\}$. The famous $q$-binomial theorem as follows
\begin{align}\label{binomialTh}
{}_1\phi_0\left( \begin{gathered} a \\ - \end{gathered} ;\,q, z \right)
=\s \frac{(a;q)_n}{(q;q)_n}z^n
= \frac{(az;q)_{\infty}}{(z;q)_{\infty}}, \,|z|<1,
\end{align}
which can derive the following two identities
\begin{align}\label{binomialTh1}
\exp_q(z)\equiv \s \frac{z^n}{(q;q)_n}
= \frac{1}{(z;q)_{\infty}},\, |z|<1,  \quad
\s \frac{(-1)^n q^{\binom n2}}{(q;q)_n}z^n= (z;q)_{\infty}.
\end{align}
The famous Jackson's transformation formula is as follows (cf. \cite[(1.5.4)]{GR})
\begin{align}\label{1eq3}
{}_2\phi_1\left( \begin{gathered} a, b \\ c \end{gathered}
;\,q, z \right)=\frac{(az;q)_{\infty}}{(z;q)_{\infty}}
{}_2\phi_2\left( \begin{gathered} a, c/b \\ c, az \end{gathered}
;\,q, bz \right), \, \max \{|z|, |bq|\}<1.
\end{align}
For any real number $r$, the $q$-shift operator $\eta_{x_i}^r$ for a function $f(x_1, x_2,\cdots, x_n)$ is defined by
\begin{align*}
\eta_{x_i}^r \{ f \}=f(x_1,\cdots,x_{i-1},q^r x_i,x_{i+1},\cdots,x_n).
\end{align*}
Very recently, Liu proved \cite{Liu2022} the following $q$-exponential operational identity.
\begin{theorem}\label{Th}
If $f(x)$ is a function of $x$, then, under suitable convergence conditions, we have the following $q$-exponential operational identity
\begin{align*}
\exp_q(t\Delta_x)f(x)=\frac{1}{(xt;q)_{\infty}}
\s \frac {t^n}{(q;q)_n} f(q^n x),
\end{align*}
where operational $\Delta_x\equiv x+\eta_x$.
\end{theorem}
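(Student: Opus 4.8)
The plan is to reduce the claim to a closed-form evaluation of the iterated operator $\Delta_x^n$ acting on $f$, and then to resum the series defining $\exp_q(t\Delta_x)$. The structural fact underlying everything is that multiplication by $x$ and the shift $\eta_x$ do not commute but $q$-commute: since $\eta_x\{xf(x)\}=qxf(qx)$ while $x\,\eta_x\{f(x)\}=xf(qx)$, we have the operator relation $\eta_x\, x=q\,x\,\eta_x$. First I would prove by induction on $n$ the identity
\[
\Delta_x^n f(x)=\sum_{k=0}^{n}\binom{n}{k}_q x^k f(q^{n-k}x),
\]
where $\binom{n}{k}_q=(q;q)_n/\big((q;q)_k(q;q)_{n-k}\big)$ is the Gaussian binomial coefficient. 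This is the $q$-analogue of the binomial expansion of $(x+\eta_x)^n$ forced by the $q$-commutation relation above.

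Second, for the inductive step I would apply $\Delta_x=x+\eta_x$ to the induction hypothesis. The summand of multiplication by $x$ raises the power of $x$ by one, while the shift acts by $\eta_x\{x^k g(x)\}=q^k x^k g(qx)$, so that
\[
\Delta_x^{n+1}f(x)=\sum_{k=0}^{n}\binom{n}{k}_q x^{k+1}f(q^{n-k}x)+\sum_{k=0}^{n}\binom{n}{k}_q q^k x^k f(q^{n+1-k}x).
\]
After shifting the summation index in the first sum, collecting the coefficient of $x^k f(q^{n+1-k}x)$ produces $\binom{n}{k-1}_q+q^k\binom{n}{k}_q$, which equals $\binom{n+1}{k}_q$ by the $q$-Pascal rule. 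This closes the induction.

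Third, I would substitute this closed form into the definition of the $q$-exponential operator and simplify the coefficient using $\binom{n}{k}_q/(q;q)_n=1/\big((q;q)_k(q;q)_{n-k}\big)$:
\[
\exp_q(t\Delta_x)f(x)=\sum_{n=0}^{\infty}\frac{t^n}{(q;q)_n}\sum_{k=0}^{n}\binom{n}{k}_q x^k f(q^{n-k}x)=\sum_{n=0}^{\infty}\sum_{k=0}^{n}\frac{(xt)^k\,t^{n-k}}{(q;q)_k(q;q)_{n-k}}f(q^{n-k}x).
\]
Setting $m=n-k$ decouples the double sum into a Cauchy product of two independent single sums, one in $k$ and one in $m$. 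The $k$-sum is $\sum_{k=0}^{\infty}(xt)^k/(q;q)_k=\exp_q(xt)=1/(xt;q)_{\infty}$ by \eqref{binomialTh1}, and the $m$-sum is exactly $\sum_{m=0}^{\infty} t^m f(q^m x)/(q;q)_m$, yielding the claimed identity.

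I expect the main obstacle to be bookkeeping rather than depth: getting the $q$-Pascal step with the correct power of $q$ (the relation $\binom{n+1}{k}_q=\binom{n}{k-1}_q+q^k\binom{n}{k}_q$ rather than its companion with $q^{n+1-k}$) is exactly where the $q$-commutation $\eta_x x=q x\eta_x$ enters, and an exponent error there would break the telescoping. The remaining analytic point is the interchange of the two summations and the factorization into a Cauchy product; this is legitimate under the \emph{suitable convergence conditions} hypothesized, for instance when $f$ is analytic near the origin and $|xt|<1$, so the absolute convergence needed to rearrange the double series holds.
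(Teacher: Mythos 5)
Your proof is correct, but it takes a genuinely different route from the one in the paper. The paper does not prove Theorem \ref{Th} directly (it is quoted from \cite{Liu2022}); however, its generalization, Theorem \ref{Th1}, is proved by the $q$-operational-equation method: one \emph{defines} $F(a,x,t)$ to be the claimed right-hand side, verifies by direct manipulation of the series that $\D_t F=\Delta_{x,a}F$ with $F(a,x,0)=f(x)$, and then invokes the existence--uniqueness theory for such $q$-difference equations (\cite[Theorems 1 and 2]{Liu2010}) to expand $F=\sum_n A_n(a,x)t^n/(q;q)_n$, deduce the recursion $A_n=\Delta_{x,a}A_{n-1}$, and conclude $F=\exp_q(t\Delta_{x,a})f(x)$. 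You instead work forward from the operator side: you establish the noncommutative $q$-binomial expansion $\Delta_x^n f(x)=\sum_{k=0}^{n}\binom{n}{k}_q x^k f(q^{n-k}x)$ by induction using $\eta_x x=qx\eta_x$ and the $q$-Pascal rule $\binom{n+1}{k}_q=\binom{n}{k-1}_q+q^k\binom{n}{k}_q$ (you have chosen the correct companion of the two Pascal rules), and then resum the double series as a Cauchy product, evaluating the $k$-sum by \eqref{binomialTh1}. Your expansion of $\Delta_x^n$ is precisely the $a=0$ case of the identity the paper records in \eqref{eq6} (attributed to \cite[Ex.\ 1.35]{GR}) and uses in \eqref{2Eq2} for other purposes, so your argument is consistent with the paper's toolkit even though it bypasses the operational-equation machinery. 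What each approach buys: yours is self-contained and elementary, needing no appeal to the solvability theory of $q$-operational equations, and it makes the combinatorial structure of $\Delta_x^n$ explicit; the paper's method requires guessing the closed form in advance but scales more readily to situations where a clean closed form for the $n$-th power of the operator is not available, and it is the template reused throughout Sections \ref{Sec2}--\ref{Sec4}. Your closing remark on convergence (absolute convergence for $|xt|<1$ justifying the rearrangement into a Cauchy product) correctly addresses the only analytic point.
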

This operational identity reveals an essential feature of the Rogers-Szeg\H{o} polynomials. Therefore Liu developed a systematic method to prove the identities involving the Rogers-Szeg\H{o} polynomials in \cite{Liu2022}. Motivated by Theorem \ref{Th}. This section mainly generalizes Theorem \ref{Th}. For that, we construct two operators $\Delta_{x,a}$ and $\Omega_{x,a}$ as follow
\begin{align*}
\Delta_{x,a}=x (1-a)\eta_a+\eta_x \quad {\rm and} \quad
\Omega_{x,a}=x+(1-a)\eta_a \eta_x.
\end{align*}
It is worth pointing out that both $\Delta_{x,a}$ and $\Omega_{x,a}$ degenerate to $\Delta_x$ when $a=0$. We shall establish the following theorem.
\begin{theorem}\label{Th1}
If $f(x)$ is a function of $x$, then, under suitable convergence conditions, we have the following two $q$-exponential operational identities:
\begin{align*}
\exp_q(t\Delta_{x,a})f(x)&=\frac{(axt;q)_{\infty}}{(xt;q)_{\infty}}
\s \frac {t^n}{(q;q)_n} f(q^n x), \\
\exp_q(t \Omega_{x,a})f(x)&=\frac{1}{(xt;q)_{\infty}} \s \frac{(a;q)_n t^n}{(q;q)_n} f(q^n x).
\end{align*}
\end{theorem}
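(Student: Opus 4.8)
The plan is to reduce both identities to the single task of evaluating the iterated operators $\Delta_{x,a}^n$ and $\Omega_{x,a}^n$ in closed form, since by definition $\exp_q(t\Delta_{x,a})f(x)=\sum_{n=0}^{\infty}\frac{t^n}{(q;q)_n}\Delta_{x,a}^n f(x)$ and likewise for $\Omega_{x,a}$. Writing the Gaussian binomial coefficient $\genfrac{[}{]}{0pt}{}{n}{k}_{q}=\frac{(q;q)_n}{(q;q)_k(q;q)_{n-k}}$, I would first compute the cases $n=1,2$ to motivate the explicit formulas
$$\Delta_{x,a}^n f(x)=\sum_{k=0}^{n}\genfrac{[}{]}{0pt}{}{n}{k}_{q}x^{n-k}(a;q)_{n-k}\,f(q^{k}x),\qquad \Omega_{x,a}^n f(x)=\sum_{k=0}^{n}\genfrac{[}{]}{0pt}{}{n}{k}_{q}x^{n-k}(a;q)_{k}\,f(q^{k}x).$$
The two formulas differ only in whether the factor $(a;q)$ carries the index $n-k$ or $k$, which mirrors the difference between placing $\eta_a$ next to $x$ in $\Delta_{x,a}$ and next to $\eta_x$ in $\Omega_{x,a}$.

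The core step is to prove these formulas by induction on $n$. Applying $\Delta_{x,a}=x(1-a)\eta_a+\eta_x$ to the inductive hypothesis, the operator $\eta_x$ turns $x^{n-k}(a;q)_{n-k}f(q^k x)$ into $q^{n-k}x^{n-k}(a;q)_{n-k}f(q^{k+1}x)$ and, after relabelling $k\mapsto k-1$, contributes a weight $q^{n+1-k}$, while $x(1-a)\eta_a$ raises the Pochhammer index through the telescoping identity $(1-a)(qa;q)_{m}=(a;q)_{m+1}$. Collecting the coefficient of $f(q^k x)$ then produces exactly $\genfrac{[}{]}{0pt}{}{n}{k}_{q}+q^{n+1-k}\genfrac{[}{]}{0pt}{}{n}{k-1}_{q}=\genfrac{[}{]}{0pt}{}{n+1}{k}_{q}$ by the $q$-Pascal rule, closing the step. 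The induction for $\Omega_{x,a}=x+(1-a)\eta_a\eta_x$ is entirely parallel: here $x$ acts without a power of $q$, whereas $(1-a)\eta_a\eta_x$ simultaneously produces the $q^{n-k}$ and applies the telescoping $(1-a)(qa;q)_{k}=(a;q)_{k+1}$, and the same $q$-Pascal identity finishes the recursion.

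With the closed forms in hand, I would substitute them into the defining series and interchange the order of summation via $j=n-k$, the Gaussian binomial cancelling the $1/(q;q)_n$. For $\Delta_{x,a}$ the double sum factors as $\bigl(\sum_{k\ge 0}\frac{t^k}{(q;q)_k}f(q^k x)\bigr)\bigl(\sum_{j\ge0}\frac{(a;q)_j}{(q;q)_j}(xt)^j\bigr)$, and the second factor is summed by the $q$-binomial theorem \eqref{binomialTh} to $\frac{(axt;q)_{\infty}}{(xt;q)_{\infty}}$, giving the first identity. For $\Omega_{x,a}$ the same rearrangement gives $\bigl(\sum_{j\ge0}\frac{(xt)^j}{(q;q)_j}\bigr)\bigl(\sum_{k\ge0}\frac{(a;q)_k t^k}{(q;q)_k}f(q^k x)\bigr)$, where now the first factor is $\exp_q(xt)=1/(xt;q)_{\infty}$ by \eqref{binomialTh1}, yielding the second identity.

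The main obstacle is the inductive bookkeeping: one must track how $\eta_a$ and $\eta_x$ act separately on the three factors $x^{n-k}$, $(a;q)_{\bullet}$ and $f(q^k x)$, and verify that the two telescoping simplifications align precisely with the $q^{n+1-k}$ weighting demanded by the $q$-Pascal recurrence, since it is easy to misplace a power of $q$ or shift the wrong index. The remaining analytic point, justifying the interchange of the double summation and the termwise use of the $q$-binomial theorem, is routine under the paper's standing assumption of suitable convergence (absolute convergence for $|xt|<1$ with $f$ growing slowly enough), so I would simply invoke that hypothesis rather than dwell on it.
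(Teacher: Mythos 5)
Your proof is correct, but it takes a genuinely different route from the paper's. The paper defines $F(a,x,t)$ to be the claimed right-hand side, shows by direct manipulation of the series that $\mathcal{D}_t F=\Delta_{x,a}F$ with $F(a,x,0)=f(x)$, and then solves this $q$-operational equation by the power-series ansatz $F=\sum_n A_n(a,x)t^n/(q;q)_n$ (justified via Liu's expansion theorems), concluding $A_n=\Delta_{x,a}^nf(x)$ and hence $F=\exp_q(t\Delta_{x,a})f(x)$; the argument for $\Omega_{x,a}$ is parallel. You instead prove the closed forms $\Delta_{x,a}^nf(x)=\sum_{k}\genfrac{[}{]}{0pt}{}{n}{k}x^{n-k}(a;q)_{n-k}f(q^kx)$ and $\Omega_{x,a}^nf(x)=\sum_{k}\genfrac{[}{]}{0pt}{}{n}{k}x^{n-k}(a;q)_{k}f(q^kx)$ by induction (your telescoping of $(1-a)(qa;q)_m=(a;q)_{m+1}$ and the $q$-Pascal rule $\genfrac{[}{]}{0pt}{}{n+1}{k}=\genfrac{[}{]}{0pt}{}{n}{k}+q^{n+1-k}\genfrac{[}{]}{0pt}{}{n}{k-1}$ both check out), then resum and factor the double series using the $q$-binomial theorem. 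Your closed forms are precisely the expansions the paper records later as (\ref{eq6}) and (\ref{2Eq2})--(\ref{2Eq2'}), citing Gasper--Rahman and omitting the computation; so your argument in effect proves those identities first and deduces the theorem from them, which is arguably more self-contained and more elementary. What the paper's route buys is the operational-equation mechanism itself (the template of Liu's method that the rest of the paper reuses) and independence from any explicit formula for the operator powers; what your route buys is a direct combinatorial proof that delivers (\ref{eq6}) as a byproduct and avoids invoking the expansion theorems of \cite{Liu2010} to legitimize the ansatz, at the cost of the summation-interchange justification you correctly flag as routine under the stated convergence hypotheses.
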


\begin{proof}
For the first equation, define the function $F(a,x,t)$ by
\begin{align}\label{1eq2}
F(a,x,t)\equiv \frac{(axt;q)_{\infty}}{(xt;q)_{\infty}}
\s \frac {t^n}{(q;q)_n} f(q^n x).
\end{align}
We know that $F(a,x,t)$ is a product of two formal power series of $t$, thus $F(a,x,t)$ is a formal power series of $t$. By simple calculation,
\begin{align*}
F(a,x,qt)&=\frac{(aqxt;q)_{\infty}}{(qxt;q)_{\infty}}
\s \frac {t^{n}q^{n}}{(q;q)_n} f(q^n x) \\
&=\frac{(aqxt;q)_{\infty}}{(qxt;q)_{\infty}}
\s \frac{t^n}{(q;q)_n} f(q^n x)
-\frac{(aqxt;q)_{\infty}}{(qxt;q)_{\infty}}
\su \frac {t^n}{(q;q)_{n-1}} f(q^n x) \\
&=\frac{(aqxt;q)_{\infty}}{(qxt;q)_{\infty}}
 \s \frac{ t^n}{(q;q)_n} f(q^n x)
-\frac{t(aqxt;q)_{\infty}}{(qxt;q)_{\infty}}
\s \frac{ t^n}{(q;q)_n} f(q^{n+1} x).
\end{align*}
Further, we have
\begin{align*}
&F(a,x,t)-F(a,x,qt) \\
&=\frac{xt(1-a)(aqxt;q)_{\infty}}{(xt;q)_{\infty}}
\s \frac{ t^n}{(q;q)_n} f(q^n x)+\frac{t(aqxt;q)_{\infty}}{(qxt;q)_{\infty}}
\s \frac{ t^n}{(q;q)_n} f(q^{n+1} x)  \\
&=xt(1-a)\eta_a F(a,x,t)+t \eta_x  F(a,x,t).
\end{align*}
It follows from which that
\begin{align*}
\D_t F(a,x,t)=(x (1-a)\eta_a+\eta_x) F(a,x,t)=\Delta_{x,a} F(a,x,t).
\end{align*}
Now, we start to solve the following operator equations
\begin{align}\label{1eq1}
\begin{cases}
\D_t F(a,x,t)=\Delta_{x,a} F(a,x,t),\\
F(a,x,0)=f(x).
\end{cases}
\end{align}
By \cite[Theorems 1 and 2]{Liu2010}, we can assume that
\begin{align*}
F(a,x,t)=\s A_n(a,x) \frac{t^n}{(q;q)_n}.
\end{align*}
Taking above equation into the first equation in (\ref{1eq1}), we deduce that
\begin{align*}
\su A_n(a,x) \frac{t^{n-1}}{(q;q)_{n-1}}
=\s \Delta_{x,a} A_n(a,x) \frac{t^n}{(q;q)_n}.
\end{align*}
Comparing the coefficients of $t^{n-1}$ in the above equation, we obtain $A_n(a,x)=\Delta_{x,a} A_{n-1}(a,x)$. It is worth noting that $A_0(a,x)=F(a,x,0)=f(x)$, then $A_n(a,x)=\Delta_{x,a}^n f(x)$. Finally, our conclusion is
\begin{align*}
F(a,x,t)=\s \frac{t^n}{(q;q)_n} \Delta_{x,a}^n f(x)
=\exp_q(t\Delta_{x,a})f(x).
\end{align*}
Combining this equation and (\ref{1eq2}), we conclude that
\begin{align*}
\exp_q(t\Delta_{x,a})f(x)&=\frac{(axt;q)_{\infty}}{(xt;q)_{\infty}}
\s \frac {t^n}{(q;q)_n} f(q^n x),
\end{align*}
this completes the first part of the proof. The proof of the second equation is similar to the first one. We set
\begin{align}\label{1eq4}
G(a,x,t)\equiv \frac{1}{(xt;q)_{\infty}}
\s \frac{(a;q)_n t^n}{(q;q)_n} f(q^n x).
\end{align}
Clearly, $G(a,x,t)$ is also a formal power series of $t$, by simple calculation, we have
\begin{align*}
G(a,x,qt)&=\frac{1}{(qxt;q)_{\infty}}
\s \frac{(a;q)_n t^{n}q^{n}}{(q;q)_n} f(q^n x) \\
&=\frac{1}{(qxt;q)_{\infty}} \s \frac{(a;q)_n t^n}{(q;q)_n} f(q^n x)
-\frac{1}{(qxt;q)_{\infty}} \su \frac{(a;q)_n t^n}{(q;q)_{n-1}} f(q^n x)\\
&=\frac{1}{(qxt;q)_{\infty}} \s \frac{(a;q)_n t^n}{(q;q)_n} f(q^n x)
-\frac{(1-a)t}{(qxt;q)_{\infty}} \s \frac{(aq;q)_n t^n}{(q;q)_n} f(q^{n+1}x).
\end{align*}
Therefore,
\begin{align*}
&G(a,x,t)-G(a,x,qt)\\
&=\frac{xt}{(xt;q)_{\infty}} \s \frac{(a;q)_n t^n}{(q;q)_n} f(q^n x)
+\frac{(1-a)t}{(qxt;q)_{\infty}}\s\frac{(aq;q)_n t^n}{(q;q)_n}f(q^{n+1}x)\\
&=xt G(a,x,t)+t(1-a)\eta_x \eta_a G(a,x,t).
\end{align*}
It follows from which that
\begin{align*}
\D_t G(a,x,t)=(x+(1-a)\eta_x \eta_a) G(a,x,t)=\Omega_{x,a} G(a,x,t).
\end{align*}
By solving the following system of operators,
\begin{align*}
\begin{cases}
\D_t G(a,x,t)=\Omega_{x,a} G(a,x,t),\\
G(a,x,0)=f(x),
\end{cases}
\end{align*}
One can easily obtain that
\begin{align*}
G(a,x,t)=\s \frac{t^n}{(q;q)_n} \Omega_{x,a}^n f(x)
=\exp_q(t\Omega_{x,a})f(x).
\end{align*}
Combining this equation and (\ref{1eq4}), we conclude that
\begin{align*}
\exp_q(t\Omega_{x,a})f(x)
=\frac{1}{(xt;q)_{\infty}}
\s \frac{(a;q)_n t^n}{(q;q)_n} f(q^n x).
\end{align*}
which completed the proof.
\end{proof}
This paper is organized as follows. In section \ref{Sec2}, we mainly present some properties of operators $\Delta_{x,a}$ and $\Omega_{x,a}$. Section \ref{Sec3} will use these two operators to derive several identities involving homogeneous Hahn polynomials (see (\ref{q-Hahn}) for definition). Section \ref{Sec4} is the applications of these two operators. In the last section, we will give a generalization of the $q$-Gaussian summation formula.

\section{Properties of operators $\Delta_{x,a}$ and $\Omega_{x,a}$}\label{Sec2}

The Hahn polynomials are $q$-orthogonal polynomials whose applications and generalizations arise in many applications such as the $q$-harmonic oscillator, theta functions, quantum groups, and coding theory. They were first studied by Hahn, and then by Al-Salam and Carlitz
(cf.\cite{Hahn,Al1965}). So they are also called Al-Salam-Carlitz polynomials. In \cite{L2015}, Liu studied the homogeneous Hahn polynomials from the perspective of $q$-partial differential equation, which are defined as
\begin{align}\label{q-Hahn}
\Phi_n^{(a)}(x,y|q)=\sum_{k=0}^n \begin{bmatrix} n \\k \end{bmatrix}
(a;q)_k x^k y^{n-k},
\end{align}
where
\begin{align*}
\begin{bmatrix} n \\k \end{bmatrix}
=\frac{ (q;q)_n }{ (q;q)_k (q;q)_{n-k} },
\,\, 0\leq k \leq n.
\end{align*}
The above equation is called Gaussian binomial coefficients are $q$-analogs of the binomial coefficients. When $y=1$, homogeneous Hahn polynomials degenerates to original Hahn polynomials $\Phi_n^{(a)}(x|q)$, that is, $\Phi_n^{(a)}(x,1|q)=\Phi_n^{(a)}(x|q)$.

This section will give some properties of operators $\Delta_{x,a}$ and $\Omega_{x,a}$. Moreover, we will establish the relationship between these two operators and homogeneous Hahn polynomials. First of all, taking $f(x)\equiv1$ in Theorem \ref{Th1} and combining separately with (\ref{binomialTh}) and the first identity of (\ref{binomialTh1}) to immediately leads to the following Proposition \ref{proposition1}.
\begin{proposition}\label{proposition1}
For $\max\{ |t|,|xt|\}<1$, we have
\begin{align*}
\exp_q(t\Delta_{x,a})1=\frac{(axt;q)_{\infty}}{(t,xt;q)_{\infty}}, \quad
\exp_q(t\Omega_{x,a})1=\frac{(at;q)_{\infty}}{(t,xt;q)_{\infty}}.
\end{align*}
\end{proposition}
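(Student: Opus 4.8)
The plan is to obtain both identities as immediate specializations of Theorem \ref{Th1}, by setting $f(x)\equiv 1$ and then recognizing the resulting univariate $q$-series in $t$ via the two summation formulas already recorded in the introduction. Since $f\equiv 1$ forces $f(q^n x)=1$ for every $n$, each operational identity of Theorem \ref{Th1} collapses to a closed-form prefactor times a single series in $t$, so the entire task reduces to evaluating that series.

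For the first identity I would substitute $f(x)\equiv 1$ into the first equation of Theorem \ref{Th1}. The series factor becomes $\sum_{n=0}^{\infty} t^n/(q;q)_n$, which by the first identity of (\ref{binomialTh1}) equals $\exp_q(t)=1/(t;q)_{\infty}$ whenever $|t|<1$. Multiplying by the surviving prefactor $(axt;q)_{\infty}/(xt;q)_{\infty}$ then yields
\[
\exp_q(t\Delta_{x,a})1=\frac{(axt;q)_{\infty}}{(xt;q)_{\infty}}\cdot\frac{1}{(t;q)_{\infty}}=\frac{(axt;q)_{\infty}}{(t,xt;q)_{\infty}},
\]
where the last step merely applies the compact multiple $q$-shifted factorial notation.

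For the second identity I would substitute $f(x)\equiv 1$ into the second equation of Theorem \ref{Th1}, so that the series factor becomes $\sum_{n=0}^{\infty}(a;q)_n t^n/(q;q)_n$. Applying the $q$-binomial theorem (\ref{binomialTh}) with $z=t$ evaluates this sum as $(at;q)_{\infty}/(t;q)_{\infty}$, again for $|t|<1$, and multiplying by the prefactor $1/(xt;q)_{\infty}$ produces the asserted $(at;q)_{\infty}/(t,xt;q)_{\infty}$.

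No genuine obstacle arises here; the only point requiring attention is the bookkeeping of convergence. The restriction $|t|<1$ is precisely what makes the $q$-exponential and $q$-binomial series converge, while $|xt|<1$ ensures that the factor $1/(xt;q)_{\infty}$ occurring in each prefactor is well defined, so the hypothesis $\max\{|t|,|xt|\}<1$ is the natural and sufficient assumption. Thus both statements follow as one-line corollaries of Theorem \ref{Th1}.
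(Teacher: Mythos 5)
Your proposal is correct and is exactly the paper's argument: the authors likewise obtain the proposition by setting $f(x)\equiv 1$ in Theorem \ref{Th1} and then summing the resulting series via the first identity of (\ref{binomialTh1}) and the $q$-binomial theorem (\ref{binomialTh}), respectively. Nothing is missing.
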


\begin{proposition}
For $\max\{ |t|,|s|,|xt|,|xs|\}<1$ and $a\neq b$, we have
\begin{align}
\exp_q(s\Delta_{x,b}) \exp_q(t\Delta_{x,a})1
&=\frac{(axt,bxs;q)_{\infty}}{(t,s,xt,xs;q)_{\infty}}
{}_1\phi_1\left( \begin{gathered} a \\ axt \end{gathered}
;\,q, xts \right), \label{2eq1} \\
\exp_q(s\Omega_{x,b}) \exp_q(t\Omega_{x,a})1
&=\frac{(at,bs;q)_{\infty}}{(t,s,xt,xs;q)_{\infty}}
{}_1\phi_1\left( \begin{gathered} b
\\ bs \end{gathered} ;\,q, xts \right).  \label{2eq2}
\end{align}
\end{proposition}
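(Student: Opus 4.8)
The plan is to evaluate the two operators one after the other: first peel off the inner operator with Proposition \ref{proposition1}, then apply Theorem \ref{Th1} to the outer one, and finally collapse the single sum that remains by means of Jackson's transformation \eqref{1eq3}.

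First I would treat \eqref{2eq1}. By Proposition \ref{proposition1} the inner operator produces $g(x):=\exp_q(t\Delta_{x,a})1=(axt;q)_\infty/(t,xt;q)_\infty$, which is a function of $x$ alone (it carries no dependence on $b$). Hence I may substitute $g$ for $f$ in the first identity of Theorem \ref{Th1}, with the pair $(b,s)$ playing the role of $(a,t)$, to get
\[
\exp_q(s\Delta_{x,b})g(x)=\frac{(bxs;q)_\infty}{(xs;q)_\infty}\sum_{m=0}^{\infty}\frac{s^m}{(q;q)_m}g(q^m x).
\]
Using $(Aq^m;q)_\infty=(A;q)_\infty/(A;q)_m$ with $A=axt$ and $A=xt$ gives $g(q^m x)=\dfrac{(axt;q)_\infty}{(t,xt;q)_\infty}\cdot\dfrac{(xt;q)_m}{(axt;q)_m}$, so the $m$-independent factor comes out and I am left with the prefactor $(axt,bxs;q)_\infty/(t,xt,xs;q)_\infty$ times
\[
S:=\sum_{m=0}^{\infty}\frac{(xt;q)_m}{(axt;q)_m(q;q)_m}s^m={}_2\phi_1\left(\begin{gathered}0,\ xt\\ axt\end{gathered};\,q,s\right),
\]
where I have inserted a redundant numerator parameter $0$ using $(0;q)_m=1$.

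Comparing with the target, it remains to prove $S=(s;q)_\infty^{-1}\,{}_1\phi_1(a;axt;q,xts)$. This is exactly Jackson's transformation \eqref{1eq3} specialized to $a\to 0$, $b\to xt$, $c\to axt$, $z\to s$: there $(az;q)_\infty=(0;q)_\infty=1$, $c/b=a$ and $bz=xts$, while the two parameters equal to $0$ (one upper, one lower in the resulting ${}_2\phi_2$) cancel and reduce it to the required ${}_1\phi_1(a;axt;q,xts)$; the hypothesis $\max\{|s|,|qxt|\}<1$ follows from $\max\{|t|,|s|,|xt|,|xs|\}<1$. Reinstating the prefactor yields \eqref{2eq1}.

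The second identity \eqref{2eq2} runs in parallel: Proposition \ref{proposition1} gives the inner value $(at;q)_\infty/(t,xt;q)_\infty$, the second identity of Theorem \ref{Th1} (again with $(b,s)$ in the role of $(a,t)$) then produces the leftover sum $T:=\sum_{m\ge 0}\frac{(b;q)_m(xt;q)_m}{(q;q)_m}s^m={}_2\phi_1(b,xt;0;q,s)$, and Jackson's transformation \eqref{1eq3} with $a\to b$, $b\to xt$, $c\to 0$, $z\to s$ collapses it to $T=\frac{(bs;q)_\infty}{(s;q)_\infty}{}_1\phi_1(b;bs;q,xts)$. I expect the only real obstacle to be this last step: one must recognize the leftover series as a ${}_2\phi_1$ with a single vanishing parameter and then see that specializing one parameter of Jackson's transformation to $0$ is precisely what collapses the ${}_2\phi_2$ on its right-hand side into a ${}_1\phi_1$. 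Everything else is routine bookkeeping with $q$-shifted factorials; the resulting asymmetry---only the inner parameter $a$ survives inside the ${}_1\phi_1$ in \eqref{2eq1}, and only the outer parameter $b$ in \eqref{2eq2}---simply reflects the non-commutativity of the two operators and is consistent with the standing assumption $a\neq b$.
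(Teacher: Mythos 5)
Your proof is correct. For \eqref{2eq1} it coincides with the paper's argument step for step: evaluate the inner operator by Proposition \ref{proposition1}, apply the first identity of Theorem \ref{Th1} to the result (which is indeed free of $b$, so the theorem applies), recognize the leftover series as a ${}_2\phi_1$ with a redundant zero parameter (the paper writes it as ${}_2\phi_1(xt,0;axt;q,s)$), and convert it to the stated ${}_1\phi_1$ via Jackson's transformation \eqref{1eq3}. For \eqref{2eq2} you diverge only at the final step: the paper evaluates $T=\sum_{n\ge 0}(b;q)_n(xt;q)_n s^n/(q;q)_n$ by expanding $(xt;q)_n$ with the finite form of the $q$-binomial theorem, interchanging the order of summation and applying \eqref{binomialTh} --- in effect proving the $c=0$ case of Jackson's transformation from scratch --- whereas you simply invoke \eqref{1eq3} with $c=0$. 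That is a legitimate and slightly cleaner shortcut, and it makes the two halves of the proof uniform; the one point worth a sentence in a write-up is that \eqref{1eq3} is stated for generic $c$, so the specialization $c=0$ should be justified (by analyticity in $c$ near $0$, or by the direct computation the paper carries out). Your bookkeeping of the $\bigl[(-1)^nq^{\binom n2}\bigr]$ exponents when zero parameters are inserted into or cancelled from the ${}_2\phi_2$ is correct in both cases, as is the verification of the convergence hypothesis of \eqref{1eq3}.
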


\begin{proof}
Appealing to  Proposition \ref{proposition1} and the first identity of Theorem \ref{Th1}, we obtain
\begin{align*}
\exp_q(s\Delta_{x,b}) \exp_q(t\Delta_{x,a})1
=&\exp_q(s\Delta_{x,b}) \left\{ \frac{(axt;q)_{\infty}}
  {(t,xt;q)_{\infty}} \right\}\\
=&\frac{(bsx;q)_{\infty}}{(t,xt;q)_{\infty}} \s \frac{s^n}{(q;q)_n} \frac{(axtq^n;q)_{\infty}}{(xtq^n;q)_{\infty}}\\
=&\frac{(axt,bxs;q)_{\infty}}{(t,xt,xs;q)_{\infty}}
{}_2\phi_1\left( \begin{gathered} xt, o \\ axt \end{gathered};\,q, s \right).
\end{align*}
Next, making use of (\ref{1eq3}), then we completes the proof of (\ref{2eq1}). Similarly, appealing to  Proposition \ref{proposition1} and the second identity of Theorem \ref{Th1}, we have
\begin{align}\label{eq1}
\exp_q(s\Omega_{x,b}) \exp_q(t\Omega_{x,a})1
&=\exp(s\Omega_{x,b}) \left\{ \frac{(at;q)_{\infty}}{(t,xt;q)_{\infty}} \right\} \nonumber \\
&=\frac{(at;q)_{\infty}}{(t,sx;q)_{\infty}} \s \frac{(b;q)_n}{(q;q)_n} \frac{s^n}{(xtq^n;q)_{\infty}} \nonumber\\
&=\frac{(at;q)_{\infty}}{(t,sx,tx;q)_{\infty}}
\s \frac{(b;q)_n (xt;q)_n}{(q;q)_n} s^n.
\end{align}
Next, using formula (\ref{binomialTh}) and $(xt;q)_n=\sum_{k=0}^n
{\tiny \begin{bmatrix} n \\k \end{bmatrix}}
 q^{\binom k2} (-xt)^k $. Then we obtain
\begin{align*}
\s \frac{(b;q)_n (xt;q)_n}{(q;q)_n} s^n
&=\s \frac{(b;q)_n}{(q;q)_n} s^n \sum_{k=0}^n \begin{bmatrix} n \\k \end{bmatrix} q^{\binom k2} (-xt)^k \\
&=\sum_{k=0}^{\infty} \sum_{n=k}^{\infty}  \frac{(b;q)_n q^{\binom k2} (-xt)^k s^n}{(q;q)_k (q;q)_{n-k} } \\
&=\sum_{k=0}^{\infty} \frac{(b;q)_k q^{\binom k2} (-xts)^k }{(q;q)_k}
\s \frac{(bq^k;q)_n}{(q;q)_n}s^n \\
&=\sum_{k=0}^{\infty} \frac{(b;q)_k q^{\binom k2} (-xts)^k }{(q;q)_k}
 \frac{(bsq^k;q)_{\infty}}{(s;q)_{\infty}}.
\end{align*}
Substituting the above equation into (\ref{eq1}). We complete the proof of (\ref{2eq2}).
\end{proof}

From \cite[Ex. 1.35]{GR} and definitions of $\Delta_{x,a}$ and $\Omega_{x,a}$, we can easily derive the following operational identities. Here we omit the details of the calculation.
\begin{align}\label{eq6}
\Delta_{x,a}^n =\sum_{k=0}^n \begin{bmatrix} n \\k \end{bmatrix} (a;q)_k x^k \eta_a^k \eta_x^{n-k}, \quad
\Omega_{x,a}^n =\sum_{k=0}^n \begin{bmatrix} n \\k \end{bmatrix} (a;q)_k x^{n-k} \eta_a^k \eta_x^k.
\end{align}
If $f$ were only a univariate function of $x$, it follows from (\ref{eq6}) that
\begin{align}
\Delta_{x,a}^n f(x)&=\sum_{k=0}^n \begin{bmatrix} n \\k \end{bmatrix}
(a;q)_k x^k \eta_x^{n-k} f(x), \label{2Eq2} \\
\Omega_{x,a}^n f(x)&=\sum_{k=0}^n \begin{bmatrix} n \\k \end{bmatrix}
(a;q)_k x^{n-k} \eta_x^k f(x).  \label{2Eq2'}
\end{align}
Taking $f(x)\equiv1$ in (\ref{2Eq2})--(\ref{2Eq2'}) and noting the definition of the homogeneous Hahn polynomials in (\ref{q-Hahn}). We arrive at the following operational representations of the homogeneous Hahn polynomials.
\begin{proposition} For any non-negative integer $n$, we have
\begin{align}
\Delta_{x,a}^n 1&=\sum_{k=0}^n \begin{bmatrix} n \\k \end{bmatrix}
(a;q)_k x^k =\Phi_n^{(a)}(x,1|q), \label{2eq4} \\
\Omega_{x,a}^n 1&=\sum_{k=0}^n \begin{bmatrix} n \\k \end{bmatrix}
(a;q)_k x^{n-k}=\Phi_n^{(a)}(1,x|q). \label{2eq4'}
\end{align}
\end{proposition}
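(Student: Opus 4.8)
The plan is to read both identities straight off the operator expansions (\ref{2Eq2}) and (\ref{2Eq2'}) by specializing the univariate function $f$ to the constant function $1$, and then to recognize the two resulting finite sums as the corresponding specializations of the homogeneous Hahn polynomials through their definition (\ref{q-Hahn}). Since (\ref{2Eq2}) and (\ref{2Eq2'}) are already established earlier in the section, the entire argument reduces to a single evaluation in each case, so I expect no genuine obstacle; the only care needed is bookkeeping which variable of $\Phi_n^{(a)}$ carries the surviving power of $x$.

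First I would set $f(x)\equiv 1$ in (\ref{2Eq2}). The $q$-shift operator satisfies $\eta_x^{r}\{1\}=1$ for every real $r$, because the constant function is fixed by any dilation of its argument. Hence each factor $\eta_x^{n-k}\{1\}$ collapses to $1$, and (\ref{2Eq2}) becomes
\[
\Delta_{x,a}^n 1=\sum_{k=0}^n \begin{bmatrix} n \\k \end{bmatrix}(a;q)_k x^k.
\]
Comparing this with (\ref{q-Hahn}) evaluated at $y=1$, where each factor $y^{n-k}$ equals $1$, yields exactly $\Phi_n^{(a)}(x,1|q)$, which proves (\ref{2eq4}).

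Next I would carry out the same specialization in (\ref{2Eq2'}). Again $\eta_x^{k}\{1\}=1$, so
\[
\Omega_{x,a}^n 1=\sum_{k=0}^n \begin{bmatrix} n \\k \end{bmatrix}(a;q)_k x^{n-k}.
\]
The one point deserving attention — and the only place where the two cases genuinely differ — is that the surviving power is $x^{n-k}$ rather than $x^{k}$. To match (\ref{q-Hahn}) I would therefore place the argument $x$ in the \emph{second} slot of $\Phi_n^{(a)}(\cdot,\cdot|q)$, i.e. evaluate at the point $(1,x)$, so that $x^k$ becomes $1^k=1$ while $y^{n-k}$ becomes $x^{n-k}$. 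This reproduces the displayed sum and gives $\Omega_{x,a}^n 1=\Phi_n^{(a)}(1,x|q)$, which is (\ref{2eq4'}).

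In summary, the proposition is a one-line consequence of (\ref{2Eq2})--(\ref{2Eq2'}) once $f\equiv 1$ is inserted and the identity $\eta_x^{r}\{1\}=1$ is used. The only substantive content is the asymmetry the two operators encode: $\Delta_{x,a}$ leaves the power $x^{k}$ (the first argument of the homogeneous polynomial), whereas $\Omega_{x,a}$ leaves $x^{n-k}$ (the second argument), which is precisely what distinguishes $\Phi_n^{(a)}(x,1|q)$ from $\Phi_n^{(a)}(1,x|q)$.
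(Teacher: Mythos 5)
Your proof is correct and follows exactly the paper's route: the paper likewise obtains the proposition by setting $f(x)\equiv 1$ in (\ref{2Eq2})--(\ref{2Eq2'}) and matching the resulting sums against the definition (\ref{q-Hahn}) of the homogeneous Hahn polynomials. Your extra remark on which argument of $\Phi_n^{(a)}(\cdot,\cdot|q)$ absorbs the surviving power of $x$ is a helpful clarification but does not change the argument.
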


\begin{proposition}\label{2eq10}
For any non-negative integers $m$ and $n$, we find that
\begin{align*}
\Delta_{x,a}^n \Phi_m^{(a)}(x,1|q)&=\Phi_{m+n}^{(a)}(x,1|q), \\
\Omega_{x,a}^n \Phi_m^{(a)}(1,x|q)&=\Phi_{m+n}^{(a)}(1,x|q).
\end{align*}
\end{proposition}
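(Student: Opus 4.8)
The plan is to read off the identity directly from the operational representations of the homogeneous Hahn polynomials already recorded in (\ref{2eq4}) and (\ref{2eq4'}), which exhibit each polynomial as the image of the constant function under a power of one of our operators. Concretely, (\ref{2eq4}) says that $\Delta_{x,a}^{m}1=\Phi_m^{(a)}(x,1|q)$, where the $1$ on the left is the constant function of the variables $x$ and $a$ on which $\Delta_{x,a}$ acts. Since $\Phi_m^{(a)}(x,1|q)$ carries genuine $a$-dependence through the factors $(a;q)_k$, one cannot legitimately feed it into the univariate formula (\ref{2Eq2}); the cleanest route is therefore to argue entirely at the level of the operators rather than with the explicit sum.

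The key observation is that $\Delta_{x,a}$ is a fixed linear operator on functions of $(x,a)$, so its iterates satisfy the obvious semigroup law $\Delta_{x,a}^{n}\circ\Delta_{x,a}^{m}=\Delta_{x,a}^{n+m}$. Substituting $\Phi_m^{(a)}(x,1|q)=\Delta_{x,a}^{m}1$ from (\ref{2eq4}) then collapses the claim to a single line:
\begin{align*}
\Delta_{x,a}^{n}\,\Phi_m^{(a)}(x,1|q)
=\Delta_{x,a}^{n}\bigl(\Delta_{x,a}^{m}1\bigr)
=\Delta_{x,a}^{n+m}1
=\Phi_{m+n}^{(a)}(x,1|q),
\end{align*}
where the final equality is again (\ref{2eq4}), now with $n$ replaced by $m+n$. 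The second identity follows by the identical route, using the companion representation $\Omega_{x,a}^{m}1=\Phi_m^{(a)}(1,x|q)$ from (\ref{2eq4'}) together with the semigroup law $\Omega_{x,a}^{n}\circ\Omega_{x,a}^{m}=\Omega_{x,a}^{n+m}$.

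There is essentially no analytic obstacle here: the whole content of the proposition is the recognition that both families of Hahn polynomials are generated by repeatedly applying a single operator to $1$, after which the statement reduces to composing operator powers. The only point demanding care is to treat $a$ as a genuine variable rather than a frozen parameter throughout, since both $\Delta_{x,a}$ and $\Omega_{x,a}$ contain the shift $\eta_a$; retaining the constant-function interpretation of the $1$ in (\ref{2eq4})--(\ref{2eq4'}) makes this bookkeeping automatic and sidesteps any need to expand and resum the explicit operator identities in (\ref{eq6}).
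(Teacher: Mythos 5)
Your proof is correct and is essentially the same as the paper's: both substitute $\Phi_m^{(a)}(x,1|q)=\Delta_{x,a}^{m}1$ from (\ref{2eq4}) and invoke the semigroup law $\Delta_{x,a}^{n}\Delta_{x,a}^{m}=\Delta_{x,a}^{n+m}$, with the second identity handled identically via (\ref{2eq4'}). Your added remark about keeping $a$ as a live variable (so that (\ref{2Eq2}) does not apply) is a sensible clarification but does not change the argument.
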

\begin{proof}
The proof of the second identity is similar to the first one. We only prove the first identity. Using $\Delta_{x,a}^{n+m}=\Delta_{x,a}^n \Delta_{x,a}^m$ and (\ref{2eq4}), we immediately conclude that
\begin{align*}
\Phi_{m+n}^{(a)}(x,1|q)=\Delta_{x,a}^n \Delta_{x,a}^m 1
=\Delta_{x,a}^n \Phi_m^{(a)}(x,1|q).
\end{align*}
\end{proof}

\begin{proposition}\label{proposition3}
For $a\neq b$, we have the following operational representations
\begin{align*}
\exp_q(t \Delta_{x,a} \Delta_{y,b})1&=
\s \Phi_n^{(a)}(x,1|q) \Phi_n^{(b)}(y,1|q) \frac{t^n}{(q;q)_n}, \\
\exp_q(t \Omega_{x,a} \Omega_{y,b})1&=
\s \Phi_n^{(a)}(1,x|q) \Phi_n^{(b)}(1,y|q) \frac{t^n}{(q;q)_n}.
\end{align*}
\end{proposition}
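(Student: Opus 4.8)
The plan is to expand the $q$-exponential operator as a power series and exploit the fact that the two operators act on disjoint families of variables. Writing out the definition of $\exp_q$ gives
\[
\exp_q(t\Delta_{x,a}\Delta_{y,b})1=\s\frac{t^n}{(q;q)_n}\bigl(\Delta_{x,a}\Delta_{y,b}\bigr)^n 1,
\]
so everything reduces to evaluating $(\Delta_{x,a}\Delta_{y,b})^n 1$. The first step I would carry out is to verify that $\Delta_{x,a}$ and $\Delta_{y,b}$ commute. Since $\Delta_{x,a}=x(1-a)\eta_a+\eta_x$ involves only the variables $x,a$ and their shifts, while $\Delta_{y,b}=y(1-b)\eta_b+\eta_y$ involves only $y,b$ and their shifts, and since $a\neq b$ guarantees these two variable families are disjoint, the two operators commute. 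Consequently $(\Delta_{x,a}\Delta_{y,b})^n=\Delta_{x,a}^n\Delta_{y,b}^n$.

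The second step is to peel off the operators one family at a time. Applying $\Delta_{y,b}^n$ to $1$ first, identity (\ref{2eq4}) gives $\Delta_{y,b}^n 1=\Phi_n^{(b)}(y,1|q)$, which depends only on $y$ and $b$ and hence is a constant with respect to $x$ and $a$. Because $\Delta_{x,a}$ is linear and its building blocks $\eta_x$, $\eta_a$, and multiplication by $x(1-a)$ all leave $x,a$-independent factors untouched, we may pull this constant outside:
\[
\Delta_{x,a}^n\Phi_n^{(b)}(y,1|q)=\Phi_n^{(b)}(y,1|q)\,\Delta_{x,a}^n 1=\Phi_n^{(b)}(y,1|q)\,\Phi_n^{(a)}(x,1|q),
\]
where the last equality is again (\ref{2eq4}). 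Substituting this back into the series yields the first identity. The second identity follows by the identical argument with $\Delta$ replaced by $\Omega$ and (\ref{2eq4}) replaced by (\ref{2eq4'}), again using that $\Omega_{x,a}$ and $\Omega_{y,b}$ commute when $a\neq b$.

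The step I expect to demand the most care is the commutativity claim $\Delta_{x,a}\Delta_{y,b}=\Delta_{y,b}\Delta_{x,a}$ together with the accompanying factorization of the $n$-th power. Although it is intuitively clear because the operators live on separate variables, one should check that the shift operator $\eta_a$ does not interfere with $\eta_b$ (and vice versa) and that multiplication by $x(1-a)$ commutes with $\eta_b$ and $\eta_y$; this is exactly where the hypothesis $a\neq b$ enters, since if $a$ and $b$ coincided the shift $\eta_a=\eta_b$ would act on both factors simultaneously and the clean factorization would fail. Once commutativity is secured, the remainder is routine bookkeeping of the power series together with two applications of the already-established operational representations (\ref{2eq4})--(\ref{2eq4'}).
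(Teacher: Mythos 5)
Your proposal is correct and follows essentially the same route as the paper's own proof: expand the $q$-exponential, factor $(\Delta_{x,a}\Delta_{y,b})^n=\Delta_{x,a}^n\Delta_{y,b}^n$, pull the $x,a$-independent factor $\Phi_n^{(b)}(y,1|q)$ through $\Delta_{x,a}^n$, and apply (\ref{2eq4}) and (\ref{2eq4'}). The only difference is that you make the commutativity of the two operators explicit, a step the paper leaves implicit.
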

\begin{proof}
It follows from (\ref{2eq4}) that for any non-negative integer $n$,
\begin{align*}
\Delta_{x,a}^n \Delta_{y,b}^n 1=\Delta_{x,a}^n \Phi_n^{(b)}(y,1|q)
=\Phi_n^{(b)}(y,1|q) \Delta_{x,a}^n 1
=\Phi_n^{(a)}(x,1|q) \Phi_n^{(b)}(y,1|q).
\end{align*}
Thereby, taking advantage of the above equation, we have
\begin{align*}
\exp_q(t \Delta_{x,a} \Delta_{y,b})1
=\s \frac{t^n}{(q;q)_n} \Delta_{x,a}^n \Delta_{y,b}^n 1
=\s \frac{t^n}{(q;q)_n} \Phi_n^{(a)}(x,1|q) \Phi_n^{(b)}(y,1|q).
\end{align*}
This completes the proof of the first equation. The proof of the second equation is similar to the first one. We omit it.
\end{proof}
The following Theorems \ref{2th2}-\ref{2eq5} reveal some profound properties of operators $\Delta_{x,a}$ and $\Omega_{x,a}$, which will be applied in later sections.
\begin{theorem}\label{2th2}
If $f(x)$ is a function of $x$, then, under suitable convergence conditions, we have the following $q$-exponential identities
\begin{align*}
\exp_q(t\Delta_{x,a})f(x)&=\frac{(axt;q)_{\infty}}{(t,xt;q)_{\infty}}
\s \frac{ (-xt)^n}{(q;q)_n} q^{\binom n2} \D_x^n f(x), \\
\exp_q(t\Omega_{x,a})f(x)&=\frac{(at;q)_{\infty}}{(t,xt;q)_{\infty}}
\s \frac{(a;q)_n (-xt)^n}{(q;q)_n(at;q)_n} q^{\binom n2} \D_x^n f(x).
\end{align*}
\end{theorem}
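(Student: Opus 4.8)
The plan is to prove both identities by combining the operational formulas from Theorem~\ref{Th1} with a Taylor-type expansion for the shift operator $\eta_x$ in terms of the $q$-derivative $\D_x$. The key observation is that the right-hand sides of Theorem~\ref{Th1} both contain the sum $\s \frac{c_n t^n}{(q;q)_n} f(q^n x)$ (with $c_n=1$ in the first case and $c_n=(a;q)_n$ in the second), so the real content is to re-expand $f(q^n x)=\eta_x^n f(x)$ in powers of $\D_x$. I would start from the standard $q$-Taylor expansion
\begin{align*}
f(q^n x)=\eta_x^n f(x)=\s \frac{(q^{-n};q)_k}{(q;q)_k}(-x)^k q^{\binom{k}{2}+(n-\binom{k}{2}\text{-type})}\D_x^k f(x),
\end{align*}
or more cleanly the relation $\eta_x^n=\sum_{k}\genfrac{[}{]}{0pt}{}{n}{k}(-x)^k q^{\binom{k}{2}}\,(\text{shift})\,\D_x^k$ coming from \cite[Ex.~1.35]{GR}; the cleanest route is to express $\eta_x^n f(x)$ via $\D_x^k f(x)$ and then interchange the order of summation over $n$ and $k$.

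First I would fix the first identity. Write the right-hand side of the first formula in Theorem~\ref{Th1} as $\frac{(axt;q)_{\infty}}{(xt;q)_{\infty}}\s \frac{t^n}{(q;q)_n}\eta_x^n f(x)$, substitute the expansion of $\eta_x^n$ in terms of $\D_x^k$, and swap the two sums. The inner sum over $n$ (for fixed $k$) should collapse, by the $q$-binomial theorem \eqref{binomialTh} or by \eqref{binomialTh1}, into a closed-form $q$-shifted-factorial factor; I expect the $(xt;q)_{\infty}$ in the denominator to be exactly reproduced by the $n$-summation, leaving behind the factor $\frac{1}{(t;q)_{\infty}}$ together with the prefactor, which would yield the claimed $\frac{(axt;q)_{\infty}}{(t,xt;q)_{\infty}}$ times $\s \frac{(-xt)^n}{(q;q)_n}q^{\binom n2}\D_x^n f(x)$. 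The algebra here is the "routine calculation" the authors allude to, so I would set it up carefully but not grind every line.

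For the second identity the structure is identical, but the coefficient $(a;q)_n$ must be carried through the double sum. After inserting the $\D_x$-expansion of $\eta_x^n f(x)$ into $\frac{1}{(xt;q)_{\infty}}\s \frac{(a;q)_n t^n}{(q;q)_n}\eta_x^n f(x)$ and interchanging summations, the inner $n$-sum will no longer be a bare $q$-binomial series but one weighted by $(a;q)_n$; I expect this inner sum to be summable in closed form by a $_1\phi_0$ or $q$-Gauss-type evaluation, producing the ratio $\frac{(at;q)_{\infty}}{(at;q)_k}$-type factors that match the $(at;q)_n$ in the denominator and the $(a;q)_n$ in the numerator of the stated answer. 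In other words, the extra factor $\frac{(a;q)_n}{(at;q)_n}$ on the right is precisely the fingerprint of the $(a;q)_n$ weighting surviving the resummation.

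The main obstacle will be bookkeeping the $q$-powers and the $(a;q)_n$ weights correctly when interchanging the two infinite sums and identifying the resulting inner series as an instance of \eqref{binomialTh}; in particular I must verify that the interchange is legitimate (justified by the formal-power-series framework used throughout, as in the proof of Theorem~\ref{Th1}) and that the telescoping/collapsing of the $n$-sum produces exactly the advertised infinite products rather than an off-by-one shift. An alternative and arguably slicker route, which I would mention as a cross-check, is to prove both identities by the same operator-equation method used for Theorem~\ref{Th1}: show that the proposed right-hand side $H(a,x,t)$ satisfies $\D_t H=\Delta_{x,a}H$ (respectively $\Omega_{x,a}H$) together with $H(a,x,0)=f(x)$, and then invoke uniqueness of the solution via \cite[Theorems 1 and 2]{Liu2010}. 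This would avoid the double-sum manipulation entirely and reduce the whole proof to one $q$-difference verification, at the cost of having to compute $\Delta_{x,a}$ and $\Omega_{x,a}$ acting on the $\D_x$-series, where \eqref{2Eq2} and \eqref{2Eq2'} would be the natural tools.
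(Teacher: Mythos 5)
Your proposal follows essentially the same route as the paper: the authors also start from Theorem \ref{Th1}, expand $f(q^n x)$ in powers of $\D_x^k f(x)$ via the finite $q$-Taylor formula $f(q^nx)=\sum_{k=0}^n \begin{bmatrix} n \\ k \end{bmatrix}(-1)^k q^{\binom k2}x^k\D_x^k f(x)$, interchange the two sums, and evaluate the inner $n$-sum by the $q$-binomial theorem, obtaining $1/(t;q)_\infty$ in the first case and $(atq^k;q)_\infty/(t;q)_\infty=(at;q)_\infty/\bigl((t;q)_\infty(at;q)_k\bigr)$ in the second, exactly as you predict. The only cosmetic difference is that your written form of the $q$-Taylor expansion is garbled (the clean version has no residual shift operator), but your identification of the mechanism and of the surviving $(a;q)_k/(at;q)_k$ factor is correct.
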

\begin{proof}
For any non-negative integer $n$, we can easily obtain the following equation by mathematical induction (cf. \cite[(3.1)]{Liu2022}),
\begin{align}\label{eq2}
f(q^nx)=\sum_{k=0}^n \begin{bmatrix} n \\k \end{bmatrix}
(-1)^k q^{\binom k2} x^k \D_x^k f(x).
\end{align}
It follows from which that
\begin{align}
\frac {t^n}{(q;q)_n} f(q^n x)
&=\s \frac{ t^n}{(q;q)_n} \sum_{k=0}^n \begin{bmatrix} n \\k \end{bmatrix} (-1)^k q^{\binom k2} x^k \D_x^k f(x) \nonumber\\
&=\sum_{k=0}^{\infty} \sum_{n=k}^{\infty}
\frac{ q^{\binom k2} t^n (-x)^k}{(q;q)_k (q;q)_{n-k}} \D_x^k f(x)\nonumber\\
&=\frac{1}{(t;q)_{\infty}}\sum_{k=0}^{\infty}
\frac{ (-xt)^k}{(q;q)_k} q^{\binom k2} \D_x^k f(x),\label{2eq3}
\end{align}
Substituting the above equation into the right-hand side of the first equation in Theorem \ref{Th1}, this completes the proof of the first equation. Applying the same method to the second equation in Theorem \ref{2th2}. Making use of (\ref{eq2}), we obtain that
\begin{align}\label{2eq9}
 \s \frac{(a;q)_n t^n}{(q;q)_n} f(q^n x)
&=\s \frac{(a;q)_n t^n}{(q;q)_n} \sum_{k=0}^n \begin{bmatrix} n \\k \end{bmatrix} (-1)^k q^{\binom k2} x^k \D_x^k f(x) \nonumber\\
&=\sum_{k=0}^{\infty} \sum_{n=k}^{\infty}
\frac{(a;q)_n t^n q^{\binom k2} (-x)^k}{(q;q)_k (q;q)_{n-k}} \D_x^k f(x)\nonumber \\
&=\sum_{k=0}^{\infty} \frac{(a;q)_k q^{\binom k2} (-xt)^k}{(q;q)_k}
\D_x^k f(x) \s \frac{(aq^k;q)_n t^n}{(q;q)_n}.
\end{align}
Substituting (\ref{2eq9}) into the right-hand side of the second equation of Theorem \ref{Th1}, which is equivalent to the right-hand side of the second equation in Theorem \ref{2th2}. This completes the proof of Theorem \ref{2th2}.
\end{proof}



\begin{theorem}\label{th5}
If $f(x)$ is a function of $x$, then, under suitable convergence conditions, we have
\begin{align*}
\Delta_{x,a}^n f(x)=\sum_{k=0}^n \begin{bmatrix} n \\k \end{bmatrix} (-x)^k
q^{\binom k2} \Phi_{n-k}^{(a)}(x,1|q) \D_x^k f(x).
\end{align*}
\end{theorem}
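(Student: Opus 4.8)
The plan is to extract the coefficient of $t^n$ from the generating-function identity already proved in Theorem \ref{2th2}. By the very definition of the $q$-exponential operator we have
\[
\exp_q(t\Delta_{x,a})f(x)=\s \frac{t^n}{(q;q)_n}\Delta_{x,a}^n f(x),
\]
so $\Delta_{x,a}^n f(x)$ is precisely $(q;q)_n$ times the $t^n$-coefficient of the left-hand side. The idea is therefore to write the right-hand side of the first identity in Theorem \ref{2th2} as a product of two explicit power series in $t$ and read off its $t^n$-coefficient through the Cauchy product.

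First I would expand the prefactor using Proposition \ref{proposition1} together with (\ref{2eq4}): since $\exp_q(t\Delta_{x,a})1=\frac{(axt;q)_{\infty}}{(t,xt;q)_{\infty}}$ and, on the other hand, $\exp_q(t\Delta_{x,a})1=\s\frac{t^n}{(q;q)_n}\Phi_n^{(a)}(x,1|q)$ by (\ref{2eq4}), we obtain the power-series expansion
\[
\frac{(axt;q)_{\infty}}{(t,xt;q)_{\infty}}=\sum_{m=0}^{\infty}\frac{t^m}{(q;q)_m}\Phi_m^{(a)}(x,1|q).
\]
Substituting this into the first identity of Theorem \ref{2th2} turns its right-hand side into the product
\[
\left(\sum_{m=0}^{\infty}\frac{t^m}{(q;q)_m}\Phi_m^{(a)}(x,1|q)\right)\left(\s \frac{(-x)^n t^n}{(q;q)_n}q^{\binom n2}\D_x^n f(x)\right).
\]
Forming the Cauchy product and collecting the coefficient of $t^n$ gives
\[
\sum_{k=0}^n \frac{1}{(q;q)_{n-k}}\Phi_{n-k}^{(a)}(x,1|q)\cdot\frac{(-x)^k}{(q;q)_k}q^{\binom k2}\D_x^k f(x),
\]
and equating this with $\frac{1}{(q;q)_n}\Delta_{x,a}^n f(x)$, then multiplying through by $(q;q)_n$, lets the factor $\frac{(q;q)_n}{(q;q)_{n-k}(q;q)_k}$ collapse into the Gaussian binomial coefficient $\begin{bmatrix} n \\ k \end{bmatrix}$, which is exactly the asserted formula.

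The main obstacle is analytic rather than algebraic: once the prefactor is expanded, everything is formal, and the only point needing care is justifying that the two power series may be multiplied term by term and rearranged under the stated convergence conditions. To sidestep this entirely I would keep a purely algebraic alternative in reserve. Starting from the operational expansion (\ref{2Eq2}), I replace $\eta_x^{n-j}f(x)=f(q^{n-j}x)$ by the finite expansion (\ref{eq2}), interchange the order of the resulting finite double sum, and then apply the subset-of-a-subset identity
\[
\begin{bmatrix} n \\ j \end{bmatrix}\begin{bmatrix} n-j \\ k \end{bmatrix}=\begin{bmatrix} n \\ k \end{bmatrix}\begin{bmatrix} n-k \\ j \end{bmatrix}
\]
to recognize the inner sum over $j$ as $\begin{bmatrix} n \\ k \end{bmatrix}\Phi_{n-k}^{(a)}(x,1|q)$ via the definition (\ref{q-Hahn}). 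Since this route involves only finite sums, it yields the same conclusion with no convergence questions, and I would use it as the rigorous backbone while presenting the generating-function computation as the motivating derivation.
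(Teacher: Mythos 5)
Your main argument is essentially the paper's own proof: the paper likewise expands the prefactor $\frac{(axt;q)_{\infty}}{(t,xt;q)_{\infty}}$ as the generating function $\sum_{m\ge 0}\Phi_m^{(a)}(x,1|q)\,t^m/(q;q)_m$ (it cites this from the literature, whereas you rederive it from Proposition \ref{proposition1} together with (\ref{2eq4}) --- an equivalent and self-contained justification), forms the Cauchy product with the series from Theorem \ref{2th2}, and compares coefficients of $t^n$. Your finite-sum backup via (\ref{2Eq2}), (\ref{eq2}) and the identity $\begin{bmatrix} n \\ j \end{bmatrix}\begin{bmatrix} n-j \\ k \end{bmatrix}=\begin{bmatrix} n \\ k \end{bmatrix}\begin{bmatrix} n-k \\ j \end{bmatrix}$ is also correct and, as you note, avoids all convergence issues; it is a legitimate alternative the paper does not use, but the primary route you present coincides with the published proof.
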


\begin{proof}
By (cf. \cite[(3.1)]{L2015}), we have
\begin{align}\label{2eq12}
\sum_{n=0}^{\infty}\Phi_{n}^{(a)}(x,y|q)
\frac{t^{n}}{(q;q)_{n}}
=\frac{(axt;q)_{\infty}}{(xt,yt;q)_{\infty}}.
\end{align}
Taking $y=1$ in the above identity and by the first equation of Theorem \ref{2th2}, we find that
\begin{align}\label{2eq11}
\s \frac{t^n}{(q;q)_n} \Delta_{x,a}^n f(x)
&=\s \Phi_n^{(a)}(x|q) \frac{t^n}{(q;q)_n} \sum_{k=0}^{\infty}
\frac{ (-xt)^k}{(q;q)_k} q^{\binom k2} \D_x^k f(x) \nonumber\\
&=\s \frac{t^{n}}{(q;q)_{n}}\sum_{k=0}^n \begin{bmatrix} n \\k \end{bmatrix} \Phi_{n-k}^{(a)}(x,1|q)
(-x)^k q^{\binom k2} \D_x^k f(x).
\end{align}
We immediately complete the proof by comparing the power series of $t^n$ on both sides of the above equation.
\end{proof}

\begin{theorem}\label{3th2}
If $f(x,a)$ is a function of two variables $x$ and $a$, then, under suitable convergence conditions, we have
\begin{align}
\Delta_{x,a}^n f(x,a)&=\sum_{k=0}^n \begin{bmatrix} n \\k \end{bmatrix}
(-1)^k q^{\binom k2}x^{k} \sum_{j=0}^{n-k} \begin{bmatrix} n-k \\j \end{bmatrix} (a;q)_{j}x^{j}
 \eta_a^j \D_x^k f(x,a), \label{3eq2} \\
\Omega_{x,a}^n f(x,a)&=\sum_{k=0}^n \begin{bmatrix} n \\k \end{bmatrix}
(-1)^k q^{\binom j2}  x^k (a;q)_{k}
\sum_{j=0}^{n-k} \begin{bmatrix} n-k \\j \end{bmatrix} (aq^{k};q)_{j}x^{n-k-j}
\eta_a^{j+k} \D_x^k f(x,a). \label{3eq3}
\end{align}
In particular, if $f$ is a function of $x$, we have
\begin{align*}
\Delta_{x,a}^n f(x)&=\sum_{k=0}^n \begin{bmatrix} n \\k \end{bmatrix}
(-x)^k q^{\binom k2} \Phi_{n-k}^{(a)}(x,1|q) \D_x^k f(x), \\
\Omega_{x,a}^n f(x)&=\sum_{k=0}^n \begin{bmatrix} n \\k \end{bmatrix}
(-1)^k q^{\binom k2}  x^k (a;q)_k \Phi_{n-k}^{(aq^k)}(1,x|q)\D_x^k f(x).
\end{align*}
\end{theorem}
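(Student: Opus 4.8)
The plan is to derive both formulas \eqref{3eq2}--\eqref{3eq3} from the explicit operator expansions already recorded in \eqref{eq6}, converting each pure $q$-shift $\eta_x$ into the $q$-derivative $\D_x$ by means of the expansion \eqref{eq2}. For the $\Delta$-identity I would start from $\Delta_{x,a}^n=\sum_{m=0}^n\begin{bmatrix}n\\m\end{bmatrix}(a;q)_m x^m\eta_a^m\eta_x^{n-m}$, so that acting on $f(x,a)$ gives
\[
\Delta_{x,a}^n f(x,a)=\sum_{m=0}^n\begin{bmatrix}n\\m\end{bmatrix}(a;q)_m x^m\, f(q^{\,n-m}x,q^{m}a).
\]
Freezing the second argument and applying \eqref{eq2} to the shift $q^{\,n-m}$ in the first argument rewrites $f(q^{\,n-m}x,q^{m}a)$ as $\sum_{k=0}^{n-m}\begin{bmatrix}n-m\\k\end{bmatrix}(-1)^k q^{\binom k2}x^k\,\eta_a^m\D_x^k f(x,a)$, where I use that $\eta_a$ and $\D_x$ act on different variables and hence commute, and that $\eta_a$ leaves the $x^k$ prefactors untouched.

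The next step is to interchange the two finite sums (over $m$ and $k$, constrained by $m+k\le n$) so that $k$ becomes the outer index, and then to collapse the product of Gaussian binomial coefficients by the elementary ``trinomial'' identity $\begin{bmatrix}n\\m\end{bmatrix}\begin{bmatrix}n-m\\k\end{bmatrix}=\begin{bmatrix}n\\k\end{bmatrix}\begin{bmatrix}n-k\\m\end{bmatrix}$, immediate from writing each coefficient as a ratio of $(q;q)$-factorials. Renaming the inner index $m\mapsto j$ yields exactly \eqref{3eq2}. For the $\Omega$-identity I would run the same scheme starting from $\Omega_{x,a}^n=\sum_{m}\begin{bmatrix}n\\m\end{bmatrix}(a;q)_m x^{n-m}\eta_a^m\eta_x^m$; after applying \eqref{eq2} to $\eta_x^m$ and interchanging sums, the natural substitution is $m=k+j$, under which the combined shift becomes $\eta_a^{k+j}$, the binomial product collapses via $\begin{bmatrix}n\\k+j\end{bmatrix}\begin{bmatrix}k+j\\k\end{bmatrix}=\begin{bmatrix}n\\k\end{bmatrix}\begin{bmatrix}n-k\\j\end{bmatrix}$, and the factorial factor splits by $(a;q)_{k+j}=(a;q)_k(aq^{k};q)_j$. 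This is precisely \eqref{3eq3} (up to the evident misprint $q^{\binom j2}\to q^{\binom k2}$). Finally, specializing $f$ to a function of $x$ alone kills every $\eta_a$, and the inner sums become $\sum_{j}\begin{bmatrix}n-k\\j\end{bmatrix}(a;q)_j x^{j}=\Phi_{n-k}^{(a)}(x,1|q)$ and $\sum_{j}\begin{bmatrix}n-k\\j\end{bmatrix}(aq^{k};q)_j x^{\,n-k-j}=\Phi_{n-k}^{(aq^{k})}(1,x|q)$ by \eqref{q-Hahn} together with \eqref{2eq4}--\eqref{2eq4'}, producing the two displayed special cases (the first of which recovers Theorem \ref{th5}).

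The step I expect to require the most care is the commutation bookkeeping, especially in the $\Omega$-case. One must remember that in $(a;q)_m x^{n-m}\eta_a^m\eta_x^m$ the factor $(a;q)_m$ is evaluated at the \emph{unshifted} $a$, while $\eta_a^m$ shifts the $a$ inside $f$; after the reindexing $m=k+j$ this is exactly what forces the split $(a;q)_{k+j}=(a;q)_k(aq^{k};q)_j$ and the appearance of $\Phi^{(aq^{k})}$ rather than $\Phi^{(a)}$ in the univariate reduction. Verifying that $\eta_a$ genuinely passes through both the $x^{k}$ prefactors and the operator $\D_x$ without side effects—so that it only shifts the surviving $a$-dependence of $f$—is the one place where a careless ordering would corrupt the final coefficients.
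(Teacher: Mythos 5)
Your proposal is correct. For the $\Omega$-identity \eqref{3eq3} it coincides with the paper's own proof step for step: expand $\Omega_{x,a}^n$ via \eqref{eq6}, convert $f(q^mx,q^ma)$ with \eqref{eq2}, reindex $m=k+j$ using $\begin{bmatrix}n\\k+j\end{bmatrix}\begin{bmatrix}k+j\\k\end{bmatrix}=\begin{bmatrix}n\\k\end{bmatrix}\begin{bmatrix}n-k\\j\end{bmatrix}$, and split $(a;q)_{k+j}=(a;q)_k(aq^k;q)_j$; you also correctly identify $q^{\binom j2}$ in the stated theorem as a misprint for $q^{\binom k2}$. For the $\Delta$-identity \eqref{3eq2} the paper takes a slightly different route: it forms the generating function $\exp_q(t\Delta_{x,a})f$, factors the resulting triple series using \eqref{2eq3}, multiplies the power series in $t$ back together, and extracts the coefficient of $t^n$. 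Your direct finite reindexing with the trinomial identity $\begin{bmatrix}n\\m\end{bmatrix}\begin{bmatrix}n-m\\k\end{bmatrix}=\begin{bmatrix}n\\k\end{bmatrix}\begin{bmatrix}n-k\\m\end{bmatrix}$ reaches the same result more economically: it is purely finitary (no convergence caveat and no coefficient comparison needed), and it treats the two identities uniformly, whereas the paper switches methods between them. Your closing remarks on the commutation of $\eta_a$ with $x^k$ and $\D_x$, and on why the split of $(a;q)_{k+j}$ produces $\Phi^{(aq^k)}$ in the univariate reduction, address exactly the points where the bookkeeping could go wrong.
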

\begin{proof}
Appealing to (\ref{eq6}) and (\ref{2eq3}), we obtain
\begin{align*}
\exp_q(t\Delta_{x,a})f(x)
=&\sum_{k=0}^{\infty}\frac{t^{k}}{(q;q)_{k}}
\Delta_{x,a}^{k}f(x,a)\\
=&\sum_{k=0}^{\infty}\frac{t^{k}}{(q;q)_{k}}
\sum_{n=0}^{k}\begin{bmatrix} k \\n \end{bmatrix}
(a;q)_{n}x^{n}\eta_{a}^{n}f(xq^{k-n},a)\\
=&\sum_{j=0}^{\infty}\frac{t^{j}}{(q;q)_{j}} \sum_{n=0}^{\infty}\frac{(a;q)_{n}}{(q;q)_{n}}
(xt)^{n}\eta_{a}^{n}
\sum_{k=0}^{\infty}\frac{(-xt)^{k}}{(q;q)_{k}}
q^{\binom k2}\D_{x}^{k}f(x,a)\\
=&\sum_{n=0}^{\infty}\frac{t^{n}}{(q;q)_{n}}
\sum_{j=0}^{n}\begin{bmatrix} k \\n \end{bmatrix}(a;q)_{j}x^{j}\eta_{a}^{j}
\sum_{k=0}^{\infty}\frac{(-xt)^{k}}{(q;q)_{k}}
q^{\binom k2}\D_{x}^{k}f(x,a)\\
=&\sum_{n=0}^{\infty}\frac{t^{n}}{(q;q)_{n}}
\sum_{k=0}^n \begin{bmatrix} n \\k \end{bmatrix}
(-x)^k q^{\binom k2} \sum_{j=0}^{n-k} \begin{bmatrix} n-k \\j \end{bmatrix} (a;q)_{j}x^{j}
 \eta_a^j \D_x^k f(x,a).
\end{align*}
Comparing the coefficients of $t^{n}$ in the above equation, this completes the proof of the first equation.
Taking advantage of (\ref{eq6}) and (\ref{eq2}), we find that
\begin{align*}
\Omega_{x,a}^n f(x,a)&=\sum_{k=0}^n \begin{bmatrix} n \\k \end{bmatrix}
(a;q)_k x^{n-k} \eta_a^k f(xq^k,a) \\
&=\sum_{k=0}^n \begin{bmatrix} n \\k \end{bmatrix}
(a;q)_k x^{n-k} \eta_a^k  \sum_{j=0}^k \begin{bmatrix} k \\j \end{bmatrix} (-1)^j q^{\binom j2} x^j \D_x^j f(x,a) \\
&=\sum_{j=0}^n \sum_{k=0}^{n-j} \begin{bmatrix} n \\j \end{bmatrix}
\begin{bmatrix} n-j \\k \end{bmatrix} x^{n-j-k} (a;q)_{k+j} (-1)^j
q^{\binom j2} x^j \eta_a^{k+j} \D_x^j f(x,a) \\
&=\sum_{j=0}^n \begin{bmatrix} n \\j \end{bmatrix} (-1)^j q^{\binom j2} x^j
(a;q)_j \sum_{k=0}^{n-j} \begin{bmatrix} n-j \\k \end{bmatrix}
x^{n-j-k} (aq^j;q)_k \eta_a^{k+j} \D_x^j f(x,a),
\end{align*}
which is equivalent to the right-hand side of equation in Theorem \ref{3th2}.
\end{proof}

\begin{theorem}\label{Th2}
If $f(x,y,a,b)$ is a function of four variables, then, under suitable convergence conditions, we have the following $q$-exponential identity
\begin{align*}
\exp&_q(t\Delta_{y,b}\Delta_{x,a})f(x,y,a,b) \\
=&\sum_{s,k,l,n=0}^{\infty} \frac{ (a;q)_{s+k} (b;q)_{s+l} x^{s+k} y^{s+l}
t^{s+k+l+n} q^{kl} } {(q;q)_s (q;q)_k (q;q)_l (q;q)_n }
f(xq^{n+l},yq^{n+k},aq^k,bq^{s+l}).
\end{align*}
\end{theorem}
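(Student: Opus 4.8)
The plan is to strip the $q$-exponential down to its power series, expand each power of the operator by the operational formula (\ref{eq6}), and then re-assemble the resulting sum into the stated quadruple series by means of a $q$-Vandermonde convolution.

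First I would exploit the fact that $\Delta_{x,a}$ involves only the variables $x,a$ while $\Delta_{y,b}$ involves only $y,b$; since these act on disjoint sets of variables they commute, so $(\Delta_{y,b}\Delta_{x,a})^n=\Delta_{y,b}^n\Delta_{x,a}^n$ and
\[
\exp_q(t\Delta_{y,b}\Delta_{x,a})f=\sum_{n=0}^{\infty}\frac{t^n}{(q;q)_n}\,\Delta_{y,b}^n\Delta_{x,a}^n f(x,y,a,b).
\]
Applying (\ref{eq6}) once in the pair $(x,a)$ and once in the pair $(y,b)$, and noting that $\eta_a^k\eta_x^{n-k}\eta_b^l\eta_y^{n-l}f=f(q^{n-k}x,q^{n-l}y,q^k a,q^l b)$, I would arrive at
\[
\Delta_{y,b}^n\Delta_{x,a}^n f=\sum_{k=0}^{n}\sum_{l=0}^{n}\begin{bmatrix} n \\k \end{bmatrix}\begin{bmatrix} n \\l \end{bmatrix}(a;q)_k(b;q)_l\,x^k y^l\,f(q^{n-k}x,q^{n-l}y,q^k a,q^l b).
\]

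The crux is to split the coefficient $\frac{1}{(q;q)_n}\begin{bmatrix} n \\k \end{bmatrix}\begin{bmatrix} n \\l \end{bmatrix}$ into a shape whose $t$-exponent becomes additive in four free indices. For this I would invoke the $q$-Vandermonde convolution
\[
\begin{bmatrix} n \\k \end{bmatrix}\begin{bmatrix} n \\l \end{bmatrix}=\sum_{s}\begin{bmatrix} n \\s \end{bmatrix}\begin{bmatrix} n-s \\k-s \end{bmatrix}\begin{bmatrix} n-k \\l-s \end{bmatrix}q^{(k-s)(l-s)},
\]
which, after cancelling one $(q;q)_n$, is equivalent to
\[
\frac{1}{(q;q)_n}\begin{bmatrix} n \\k \end{bmatrix}\begin{bmatrix} n \\l \end{bmatrix}=\sum_{s}\frac{q^{(k-s)(l-s)}}{(q;q)_s(q;q)_{k-s}(q;q)_{l-s}(q;q)_{n-k-l+s}}.
\]
Substituting this turns the double sum into a quadruple sum over $n,k,l,s$ in which each $(q;q)$-factor carries exactly one of the four quantities $s$, $k-s$, $l-s$, $n-k-l+s$.

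Finally I would relabel by the bijection $s\mapsto s$, $k\mapsto s+k$, $l\mapsto s+l$, $n\mapsto s+k+l+n$, under which $k-s,\,l-s,\,n-k-l+s$ become new independent indices ranging over all nonnegative integers. This sends $t^n\mapsto t^{s+k+l+n}$, $x^k y^l\mapsto x^{s+k}y^{s+l}$, $(a;q)_k(b;q)_l\mapsto(a;q)_{s+k}(b;q)_{s+l}$, $q^{(k-s)(l-s)}\mapsto q^{kl}$, and the four shifts in $f$ into $q^{n+l}x$, $q^{n+k}y$, $q^{s+k}a$, $q^{s+l}b$, which is precisely the asserted series. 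I expect the main obstacle to be a clean proof of the $q$-Vandermonde convolution in exactly this normalization, together with disciplined bookkeeping of the index ranges through the relabelling; the commutativity reduction and the two applications of (\ref{eq6}) are routine. I would only flag that the relabelling forces the third argument of $f$ to emerge as $aq^{s+k}$ (from $q^k a$ with $k\mapsto s+k$), so that the power of $x$ and the shift of $a$ stay locked together exactly as the power of $y$ and the shift of $b$ do; this is also what the $(x,a)\leftrightarrow(y,b)$ symmetry of the operator demands.
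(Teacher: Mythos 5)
Your proof is correct and takes a genuinely different route from the paper's. The paper first expands $\exp_q(t\Delta_{x,a})$ alone via (\ref{eq6}), then performs the operational substitution $t\to t\Delta_{y,b}$ and expands the resulting powers $\Delta_{y,b}^{k+n}=\Delta_{y,b}^{k}\Delta_{y,b}^{n}$ in two stages ((\ref{2eq7}) and (\ref{2eq8})); the fourth summation index $s$ is generated by the re-indexing hidden inside (\ref{2eq8}). You instead expand both operators at once, obtaining
\begin{equation*}
\sum_{n=0}^{\infty}\frac{t^n}{(q;q)_n}\sum_{k,l=0}^{n}\begin{bmatrix} n \\ k \end{bmatrix}\begin{bmatrix} n \\ l \end{bmatrix}(a;q)_k(b;q)_l\,x^k y^l\,f(q^{n-k}x,q^{n-l}y,q^ka,q^lb),
\end{equation*}
and manufacture the index $s$ by an explicit $q$-Vandermonde convolution. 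Your convolution is correct in exactly the normalization you wrote: using $\begin{bmatrix} n \\ s \end{bmatrix}\begin{bmatrix} n-s \\ k-s \end{bmatrix}=\begin{bmatrix} n \\ k \end{bmatrix}\begin{bmatrix} k \\ s \end{bmatrix}$ it reduces to the standard form $\begin{bmatrix} n \\ l \end{bmatrix}=\sum_{s}\begin{bmatrix} k \\ s \end{bmatrix}\begin{bmatrix} n-k \\ l-s \end{bmatrix}q^{(k-s)(l-s)}$, so the ``main obstacle'' you anticipate is not actually an obstacle. What the paper's route buys is that it never needs the Vandermonde identity explicitly; what yours buys is manifest $(x,a)\leftrightarrow(y,b)$ symmetry and a single transparent re-indexing.

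The point you flag at the end is not a defect of your argument but of the statement: your derivation forces the third argument of $f$ to be $aq^{s+k}$, and that is the correct version; the $aq^{k}$ printed in the theorem is a slip. You can confirm this at order $t^{1}$ with $f(x,y,a,b)=a$: the left-hand side gives $\Delta_{y,b}\Delta_{x,a}a=\bigl(1+y(1-b)\bigr)\bigl(a+qxa(1-a)\bigr)$, whose $xy$-term is $qxya(1-a)(1-b)$, and only the term $(s,k,l,n)=(1,0,0,0)$ evaluated at $aq^{s+k}=aq$ reproduces the factor $q$. The slip in the paper's own proof occurs in its final display, where (\ref{2eq8}) is applied to a function $g$ that itself depends on $k$ through $aq^{k}$, and the re-indexing $k\mapsto k+s$ is not propagated into that argument. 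Since the theorem is only ever invoked with $f\equiv 1$ (Theorem \ref{Th3}), nothing downstream is affected, but the symmetric form you derived is the one that should be recorded.
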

\begin{proof}
It follows from (\ref{eq6}) that
\begin{align}\label{2th3}
\exp_q(t\Delta_{x,a})f(x,y,a,b)
&=\s \frac{t^n}{(q;q)_n} \Delta_{x,a}^n f(x,y,a,b)\nonumber\\
&=\s \frac{t^n}{(q;q)_n} \sum_{k=0}^n \begin{bmatrix} n \\k \end{bmatrix} (a;q)_k x^k \eta_a^k \eta_x^{n-k} f(x,y,a,b) \nonumber\\
&=\sum_{k=0}^{\infty} \frac{(a;q)_k (xt)^k}{(q;q)_k}
\s \frac{t^n}{(q;q)_n} f(xq^n,y,aq^k,b).
\end{align}
Letting $t\to t\Delta_{y,b}$ in the above equation, we obtain
\begin{align}\label{2eq6}
 \exp_q(t\Delta_{y,b}\Delta_{x,a})f(x,y,a,b)
=\sum_{k=0}^{\infty} \frac{(a;q)_k (xt)^k}{(q;q)_k} \Delta_{y,b}^k
\s \frac{t^n}{(q;q)_n} \Delta_{y,b}^n f(xq^n,y,aq^k,b).
\end{align}
As a matter of fact, by (\ref{eq6}) and simple calculation, we have
\begin{align}\label{2eq7}
\s \frac{t^n}{(q;q)_n} \Delta_{y,b}^n f(xq^n,y,aq^k,b)
&=\s \frac{t^n}{(q;q)_n} \sum_{l=0}^n \begin{bmatrix} n \\l \end{bmatrix} (b;q)_l y^l \eta_b^l \eta_y^{n-l} f(xq^n,y,aq^k,b) \nonumber \\
&=\s \frac{t^n}{(q;q)_n} \sum_{l=0}^n \begin{bmatrix} n \\l \end{bmatrix} (b;q)_l y^l  f(xq^n,yq^{n-l},aq^k,bq^l) \nonumber \\
&=\sum_{l=0}^{\infty} \frac{(b;q)_l (yt)^l}{(q;q)_l} \s
\frac{t^n}{(q;q)_n} f(xq^{n+l},yq^n,aq^k,bq^l).
\end{align}
Furthermore, suppose $g$ is a function of variables $y$ and $b$, we have
\begin{align}\label{2eq8}
\sum_{k=0}^{\infty} \frac{(a;q)_k (xt)^k}{(q;q)_k}\Delta_{y,b}^k g(y,b)
&= \sum_{k=0}^{\infty} \frac{(a;q)_k (xt)^k}{(q;q)_k}
\sum_{s=0}^k \begin{bmatrix} k \\s \end{bmatrix} (b;q)_s y^s
\eta_b^s \eta_y^{k-s} g(y,b) \nonumber \\
&=\sum_{s=0}^{\infty} \frac{(a,b;q)_s}{(q;q)_s} (xyt)^s \sum_{k=0}^{\infty}
\frac{(aq^s;q)_k}{(q;q)_k}(xt)^k g(yq^k,bq^s).
\end{align}
Finally, using the formulas (\ref{2eq7}) and (\ref{2eq8}), (\ref{2eq6}) can be written as
\begin{align*}
 \exp_q&(t\Delta_{y,b}\Delta_{x,a})f(x,y,a,b)\\
=&\sum_{s=0}^{\infty} \frac{(a,b;q)_s}{(q;q)_s} (xyt)^s \sum_{k=0}^{\infty}
 \frac{(aq^s;q)_k}{(q;q)_k}(xt)^k
 \s \frac{t^n}{(q;q)_n} \Delta_{y,b}^n
 f(xq^{n},yq^{k},aq^k,bq^{s}) \\
=&\sum_{s=0}^{\infty} \frac{(a,b;q)_s}{(q;q)_s} (xyt)^s \sum_{k=0}^{\infty}
\frac{(aq^s;q)_k}{(q;q)_k}(xt)^k \sum_{l=0}^{\infty} \frac{(bq^s;q)_l (ytq^k)^l}{(q;q)_l} \s \frac{t^n}{(q;q)_n} f(xq^{n+l},yq^{n+k},aq^k,bq^{s+l}),
\end{align*}
which is equivalent to the right-hand side of the theorem. The proof is completed.
\end{proof}
Using a similar method as Theorem \ref{Th2}, we can also obtain the following Theorem \ref{2eq5}.
\begin{theorem}\label{2eq5}
If $f(x,y,a,b)$ is a function of four variables, then, under suitable convergence conditions, we have the following $q$-exponential identity
\begin{align*}
\exp&_q(t\Omega_{x,a}\Omega_{y,b})f(x,y,a,b) \nonumber\\
=&\sum_{s,k,l,n=0}^{\infty} \frac{ (a;q)_{s+k} (b;q)_{s+l} x^{l+n} y^{k+n}
t^{s+k+l+n} q^{sn} } {(q;q)_s (q;q)_k (q;q)_l (q;q)_n }
f(xq^k,yq^{l+s},aq^k,bq^{l+s}).
\end{align*}
\end{theorem}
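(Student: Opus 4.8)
The plan is to mirror the proof of Theorem \ref{Th2} essentially verbatim, swapping the roles played by the two operators and tracking the resulting change of exponents. By the second identity in (\ref{eq6}) we have $\Omega_{x,a}^n=\sum_{k=0}^n{\tiny\begin{bmatrix} n \\k \end{bmatrix}}(a;q)_k x^{n-k}\eta_a^k\eta_x^k$, so the analogue of (\ref{2th3}) reads
\begin{align*}
\exp_q(t\Omega_{x,a})f(x,y,a,b)
=\sum_{k=0}^{\infty}\frac{(a;q)_k t^k}{(q;q)_k}x^{-k}\eta_a^k\eta_x^k
\s\frac{(xt)^n}{(q;q)_n}f(xq^n,y,a,b),
\end{align*}
which after reindexing becomes $\sum_{k=0}^{\infty}\frac{(a;q)_k t^k}{(q;q)_k}\s\frac{t^n}{(q;q)_n}f(xq^{n+k},y,aq^k,b)$. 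First I would record this single-operator expansion, then replace $t\to t\Omega_{y,b}$ exactly as in (\ref{2eq6}) to peel off the $x$-dependence and expose an inner sum in the $y,b$ variables.

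Next I would expand the inner $\Omega_{y,b}^n$ and $\Omega_{y,b}^k$ factors using (\ref{eq6}) a second time, producing the analogues of (\ref{2eq7}) and (\ref{2eq8}). The key bookkeeping is that $\Omega_{y,b}$ acts with $\eta_b^l\eta_y^l$ (both shifts on the same index), in contrast to $\Delta_{y,b}$'s $\eta_b^l\eta_y^{n-l}$; this is precisely what turns the $q^{kl}$ cross-term of Theorem \ref{Th2} into the $q^{sn}$ cross-term here and redistributes the powers of $x$ and $y$. The combinatorial manipulation is a repeated application of the Vandermonde-type rearrangement $\sum_{n}\sum_{l\le n}=\sum_l\sum_{n\ge l}$ together with the splitting $(a;q)_{k}(aq^k;q)_{s}=(a;q)_{k+s}$, which is what assembles $(a;q)_{s+k}$ and $(b;q)_{s+l}$ in the stated four-fold sum. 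I would carry the four summation indices $s,k,l,n$ in the same order the statement lists them, so that matching the final answer is a direct comparison.

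After collecting the three expansions, the last step is to substitute back into the $\Omega$-analogue of (\ref{2eq6}) and read off the quadruple sum. I expect the main obstacle to be purely clerical rather than conceptual: keeping the arguments of $f$ and the accumulated $\eta_a,\eta_b,\eta_x,\eta_y$ shifts consistent through the reindexings, since each $\Omega$ couples a $q^k$ shift in $x$ to a $q^k$ shift in $a$ (and likewise for $y,b$), and it is easy to mismatch an exponent and land on $f(xq^k,yq^{l+s},aq^k,bq^{l+s})$ versus a permuted version. To guard against this I would verify the final exponents against two sanity checks: setting $a=b=0$ should collapse the identity to the corresponding $\Delta_x$-type result of \cite{Liu2022} (since $\Omega_{x,0}=\Delta_x$), and setting $f\equiv 1$ should reproduce a product of two homogeneous Hahn polynomials consistent with Proposition \ref{proposition3}. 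Once those checks pass, the comparison of coefficients completes the proof, exactly as the final line of Theorem \ref{Th2} does.
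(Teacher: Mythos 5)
Your overall strategy is the one the paper intends: Theorem \ref{Th2} is proved in detail and Theorem \ref{2eq5} is dispatched with ``using a similar method,'' so mirroring that proof with the roles of the operators swapped is exactly the right route. The problem is that the one concrete formula you write down --- the single-operator expansion on which everything else is built --- is wrong. In $\Omega_{x,a}^{N}=\sum_{k=0}^{N}\left[\begin{smallmatrix} N \\ k \end{smallmatrix}\right](a;q)_{k}x^{N-k}\eta_{a}^{k}\eta_{x}^{k}$ the shift in $x$ is $\eta_{x}^{k}$, tied to the \emph{same} index $k$ as $\eta_{a}^{k}$; it is not $\eta_{x}^{N-k}$ as in $\Delta_{x,a}^{N}$. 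Hence after the reindexing $N=k+n$ the argument of $f$ is $xq^{k}$, independent of the inner index $n$, and the inner sum is the pure factor $\s (xt)^{n}/(q;q)_{n}=1/(xt;q)_{\infty}$:
\begin{align*}
\exp_q(t\Omega_{x,a})f(x,y,a,b)=\sum_{k=0}^{\infty}\frac{(a;q)_k t^k}{(q;q)_k}\s\frac{(xt)^n}{(q;q)_n}\,f(xq^{k},y,aq^k,b),
\end{align*}
which is just the second identity of Theorem \ref{Th1} and is rederived in the paper as (\ref{5eq3}). Your version, $\sum_{k}\frac{(a;q)_k t^k}{(q;q)_k}\s\frac{t^n}{(q;q)_n}f(xq^{n+k},y,aq^k,b)$, has imported the $\Delta$-pattern (argument $xq^{n+k}$, weight $t^{n}$); already for $f(x)=x$ it contradicts Theorem \ref{Th1}. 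The intermediate display with the prefactor $x^{-k}\eta_{a}^{k}\eta_{x}^{k}$ is also not a legitimate rewriting, because the multiplication by $x^{N-k}$ in $\Omega_{x,a}^{N}$ occurs after the shift $\eta_{x}^{k}$ and the two do not commute. Carried forward, this error corrupts every subsequent exponent in the quadruple sum.

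A second, related concern: the safeguards you propose would not catch the failure mode you yourself identify. Setting $f\equiv1$ makes the argument of $f$ invisible (it only tests the coefficients, i.e.\ Theorem \ref{3th1'}), and setting $a=b=0$ only tests the $\Delta_{x}$ degeneration; neither pins down the shifts inside $f$. A low-degree test such as $f=x$ does: computing $\Omega_{x,a}\Omega_{y,b}\,x=(y+1-b)\bigl(x^{2}+(1-a)qx\bigr)$ directly and matching the coefficient of $t$ in the quadruple sum forces the first and third arguments of $f$ to carry the shift $q^{s+k}$ rather than $q^{k}$; that is, the correctly executed computation lands on $f(xq^{s+k},yq^{s+l},aq^{s+k},bq^{s+l})$, not on the expression printed in the statement. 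So your worry about ``landing on $f(xq^k,yq^{l+s},aq^k,bq^{l+s})$ versus a permuted version'' is precisely the live issue here, and you need the stronger monomial check, not the $f\equiv1$ check, to resolve it. Fix the single-operator expansion to agree with (\ref{5eq3}), carry the $q^{sn}$ cross-term through as you describe, and verify the final exponents against $f=x$; the rest of your outline then goes through.
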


\section{Identities for the homogeneous Hahn polynomials}\label{Sec3}

Al-Salam and Carlitz \cite{Al1965} found two bilinear generating functions by the transformation theory of $q$-series, which is also called the $q$-Mehler formula for the homogeneous Hahn polynomials. This section will study the homogeneous Hahn polynomials from the perspective of operators $\Delta_{x,a}$ and $\Omega_{x,a}$. For example, we give a new proof of bilinear generating function for Hahn polynomials (see Theorems \ref{3th1} and \ref{3th4}). Taking $f\equiv1$ in Theorem \ref{Th2}, we obtain following Theorem \ref{Th3}.
\begin{theorem}\label{Th3}
For $\max \{|t|,|xt|,|yt|\}<1$, we have the following operator identity
\begin{align*}
\exp_q(t\Delta_{y,b}\Delta_{x,a})1
=\frac{(axt,byt;q)_{\infty}}{(t,xt,yt;q)_{\infty}}
{}_3\phi_2\left( \begin{gathered} a, b, t \\ axt, byt \end{gathered}
 ;\,q, xyt \right).
\end{align*}
\end{theorem}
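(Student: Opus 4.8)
The plan is to specialize Theorem \ref{Th2}. Before computing it is worth noting that, since $\Delta_{x,a}$ and $\Delta_{y,b}$ act on disjoint variables and hence commute, Proposition \ref{proposition3} already identifies $\exp_q(t\Delta_{y,b}\Delta_{x,a})1$ with the bilinear sum $\sum_{n}\Phi_n^{(a)}(x,1|q)\Phi_n^{(b)}(y,1|q)\,t^n/(q;q)_n$; thus Theorem \ref{Th3} is precisely the $q$-Mehler (bilinear generating) formula for the homogeneous Hahn polynomials, and the displayed ${}_3\phi_2$ is its closed form. To prove it I would set $f\equiv1$ in Theorem \ref{Th2}: the shift operators act trivially on the constant, and after writing $x^{s+k}y^{s+l}t^{s+k+l}=(xyt)^s(xt)^k(yt)^l$ one is left with the explicit quadruple series. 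The $n$-sum is immediate, $\sum_{n}t^n/(q;q)_n=1/(t;q)_\infty$ by the first identity in (\ref{binomialTh1}), so it remains to evaluate the triple sum $S$ over $s,k,l$ and to show $S=\frac{(axt,byt;q)_\infty}{(xt,yt;q)_\infty}\,{}_3\phi_2(\cdots)$.

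Next I would collapse two indices by the $q$-binomial theorem. Splitting $(a;q)_{s+k}=(a;q)_s(aq^s;q)_k$ and absorbing the coupling via $q^{kl}(xt)^k=(xtq^l)^k$, the $k$-sum is $\sum_{k}(aq^s;q)_k(xtq^l)^k/(q;q)_k=(axtq^{s+l};q)_\infty/(xtq^l;q)_\infty$ by (\ref{binomialTh}); rewriting through $(Aq^j;q)_\infty=(A;q)_\infty/(A;q)_j$ produces the prefactor $(axt;q)_\infty/(xt;q)_\infty$ and a double sum in $s,l$. Carrying out the $l$-sum with $(b;q)_{s+l}=(b;q)_s(bq^s;q)_l$ and $(axt;q)_{s+l}=(axt;q)_s(axtq^s;q)_l$ recognizes it as a ${}_2\phi_1$, giving
\begin{align*}
S=\frac{(axt;q)_\infty}{(xt;q)_\infty}\sum_{s=0}^{\infty}
\frac{(a;q)_s(b;q)_s(xyt)^s}{(q;q)_s(axt;q)_s}\,
{}_2\phi_1\left(\begin{gathered}bq^s,xt\\ axtq^s\end{gathered};q,yt\right).
\end{align*}

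The decisive step is to apply Jackson's transformation (\ref{1eq3}) to this inner ${}_2\phi_1$ — whose parameters and argument all carry the outer index $s$ — with $(a,b,c,z)\mapsto(bq^s,xt,axtq^s,yt)$, so that $az=bytq^s$, $c/b=aq^s$, $bz=xyt^2$:
\begin{align*}
{}_2\phi_1\left(\begin{gathered}bq^s,xt\\ axtq^s\end{gathered};q,yt\right)
=\frac{(bytq^s;q)_\infty}{(yt;q)_\infty}\,
{}_2\phi_2\left(\begin{gathered}aq^s,bq^s\\ axtq^s,bytq^s\end{gathered};q,xyt^2\right).
\end{align*}
The factor $(bytq^s;q)_\infty=(byt;q)_\infty/(byt;q)_s$ completes the prefactor to $(axt,byt;q)_\infty/(xt,yt;q)_\infty$ and supplies the missing $(byt;q)_s$ in the denominator. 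Expanding the ${}_2\phi_2$ as a $p$-series, merging $q$-shifted factorials by $(A;q)_s(Aq^s;q)_p=(A;q)_{s+p}$, and reindexing $M=s+p$, the inner $p$-sum is exactly the terminating form of the second identity in (\ref{binomialTh1}),
\begin{align*}
\sum_{p=0}^{M}\begin{bmatrix}M\\ p\end{bmatrix}(-1)^p q^{\binom p2}t^p=(t;q)_M,
\end{align*}
which is precisely the factor that upgrades the series to the required ${}_3\phi_2$:
\begin{align*}
S=\frac{(axt,byt;q)_\infty}{(xt,yt;q)_\infty}
\sum_{M=0}^{\infty}\frac{(a,b,t;q)_M}{(q;q)_M(axt,byt;q)_M}(xyt)^M,
\end{align*}
and dividing by $(t;q)_\infty$ finishes the argument.

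I expect the main obstacle to be this last passage, because the identity is genuinely a reduction of a double series to a single one. The inner ${}_2\phi_1$ does \emph{not} sum in closed form (at $s=0$ it is not $q$-Gauss summable), so no term-by-term comparison in $s$ is available; moreover a premature attempt to evaluate the double sum by iterated $q$-binomial summation only reproduces the same nested ${}_2\phi_1$. What makes everything close is the particular shape of Jackson's formula: the quadratic argument $xyt^2$ and the factor $(-1)^p q^{\binom p2}$ it introduces are exactly what the finite $q$-binomial theorem converts into $(t;q)_M$ after the diagonal reindexing $M=s+p$. The only remaining points requiring care are the repeated product-shift identities $(Aq^j;q)_\infty=(A;q)_\infty/(A;q)_j$ and the legitimacy of interchanging the (absolutely convergent for small $|t|,|xt|,|yt|$) summations.
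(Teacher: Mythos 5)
Your proof is correct, but it takes a genuinely different route from the paper's. Both start identically, setting $f\equiv1$ in Theorem \ref{Th2} and performing the free $n$-sum; the divergence is in how the remaining triple sum is collapsed. The paper sums over $l$ first, reindexes the residual $(s,k)$ double sum along the diagonal so that the inner finite sum becomes the two-parameter polynomial $\Phi_n^{(b,yt)}(y,1|q)$ of \cite[(8.2)]{Liu2022}, then invokes \cite[Lemma 8.1]{Liu2022} to turn the generating series of these polynomials into a ${}_3\phi_2$, and finally applies the three-term ${}_3\phi_2$ transformation \cite[Proposition 7.3]{Liu2022} twice to reach the symmetric form. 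You instead sum over $k$, isolate a nested ${}_2\phi_1\bigl(bq^s,xt;axtq^s;q,yt\bigr)$, convert it by Jackson's transformation (\ref{1eq3}) into a ${}_2\phi_2$ with argument $xyt^2$, and collapse the resulting double sum diagonally via $\sum_{p=0}^{M}\genfrac{[}{]}{0pt}{}{M}{p}(-1)^pq^{\binom p2}t^p=(t;q)_M$. Your calculation checks out at every step (the splittings $(a;q)_{s+k}=(a;q)_s(aq^s;q)_k$, the absorption $q^{kl}(xt)^k=(xtq^l)^k$, and the final merge $(A;q)_s(Aq^s;q)_p=(A;q)_{s+p}$ are all used correctly), and it has the advantage of being entirely self-contained within the identities displayed in this paper — (\ref{binomialTh}), (\ref{binomialTh1}) and (\ref{1eq3}) — whereas the paper's argument imports two nontrivial external lemmas from \cite{Liu2022}. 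What the paper's route buys in exchange is the explicit appearance of the polynomials $\Phi_n^{(\alpha,\beta)}$, which the same Lemma 8.1 machinery reuses later in the proof of Theorem \ref{2th1}.
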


\begin{proof}
Substituting $f(x,y,a,b)\equiv1$ in Theorem \ref{Th2}, we have
\begin{align*}
 \exp_q(t\Delta_{y,b}\Delta_{x,a})1
&=\sum_{s,k,l,n=0}^{\infty} \frac{ (a;q)_{s+k} (b;q)_{s+l} x^{s+k} y^{s+l}
  t^{s+k+l+n} q^{kl} } {(q;q)_s (q;q)_k (q;q)_l (q;q)_n } \\
&=\frac{1}{(t;q)_{\infty}} \sum_{s,k=0}^{\infty} \frac{ (a;q)_{s+k}
  (b;q)_s (xt)^{s+k} y^s  } {(q;q)_s (q;q)_k   }
  \sum_{l=0}^{\infty} \frac{(bq^s;q)_l}{(q;q)_l} (ytq^k)^l \\
&=\frac{(byt;q)_{\infty}}{(t,yt;q)_{\infty}}  \sum_{s,k=0}^{\infty}
 \frac{ (a;q)_{s+k} (b;q)_s (yt;q)_k (xt)^{s+k} y^s  }
 {(q;q)_s (q;q)_k (byt;q)_{s+k}  } \\
&=\frac{(byt;q)_{\infty}}{(t,yt;q)_{\infty}} \s \frac{(a;q)_n (xt)^n}{(q;q)_{n}(byt;q)_n}
\sum_{s=0}^n \begin{bmatrix} n \\s \end{bmatrix}
(b;q)_{s}(yt;q)_{n-s}y^{s}.
\end{align*}
Taking $\alpha\to b$, $\beta \to yt$, $u\to y$ and $v\to 1$ into \cite[Lemma 8.1]{Liu2022}, then we have
\begin{align*}
\exp_q(t\Delta_{y,b}\Delta_{x,a})1
=\frac{(byt;q)_{\infty}}{(t,yt;q)_{\infty}} \s \frac{(a;q)_n \Phi_n^{(b,yt)}(y,1|q)(xt)^n}
{(q;q)_n (byt;q)_n} .
\end{align*}
Letting $t\to xt$, $\gamma\to a$, $\alpha\to b$, $\beta\to yt$, $u\to y$ and $v\to 1$ in \cite[Lemma 8.1]{Liu2022}. The above equation can be written as
\begin{align*}
\exp_q(t\Delta_{y,b}\Delta_{x,a})1=
\frac{(a,byxt,xyt^2;q)_{\infty}}{(t,xt,yt,xyt;q)_{\infty}}
{}_3\phi_2\left( \begin{gathered} byt/a, xyt, xt \\ bxyt, xyt^2 \end{gathered} ;\,q, a \right).
\end{align*}
Further, letting $a_1\to byt/a$, $a_2\to xyt$, $a_3\to xt$, $b_1\to bxyt$, $b_2\to xyt^2$ in \cite[Proposition 7.3]{Liu2022}. The above equation is equivalent to
\begin{align*}
\exp_q(t\Delta_{y,b}\Delta_{x,a})1=
\frac{(axt,bxyt;q)_{\infty}}{(t,xt,xyt;q)_{\infty}}
{}_3\phi_2\left( \begin{gathered} ax, xt, b \\ axt, bxyt
\end{gathered};\,q, yt\right).
\end{align*}
Using \cite[Proposition 7.3]{Liu2022} again on the right side of the above equation yields
\begin{align*}
\exp_q(t\Delta_{y,b}\Delta_{x,a})1&=
\frac{(axt,bxyt;q)_{\infty}}{(t,xt,xyt;q)_{\infty}}
\frac{(xyt, byt;q)_{\infty}}{(bxyt, yt;q)_{\infty}}
{}_3\phi_2\left( \begin{gathered} a, b, t \\ axt, byt
\end{gathered};\,q, xyt\right) \\
&=\frac{(axt,byt;q)_{\infty}}{(t,xt,yt;q)_{\infty}}
{}_3\phi_2\left( \begin{gathered} a, b, t \\ axt, byt \end{gathered}
 ;\,q, xyt \right).
\end{align*}
This completes the proof.
\end{proof}

\begin{theorem}\label{3th1}
For $\max \{|t|,|xt|,|yt|\}<1$, we have
\begin{align*}
\s \Phi_n^{(a)}(x,1|q) \Phi_n^{(b)}(y,1|q) \frac{t^n}{(q;q)_n}
=\frac{(axt,byt;q)_{\infty}}{(t,xt,yt;q)_{\infty}}
{}_3\phi_2\left( \begin{gathered} a, b, t \\ axt, byt \end{gathered}
 ;\,q, xyt \right),
\end{align*}
which is called the $q$-Mehler formula for the Hahn polynomials.
\end{theorem}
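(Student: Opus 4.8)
The plan is to derive this identity as an immediate consequence of Proposition \ref{proposition3} and Theorem \ref{Th3}, the essential observation being that the two operators $\Delta_{x,a}$ and $\Delta_{y,b}$ commute. Indeed, $\Delta_{x,a}=x(1-a)\eta_a+\eta_x$ acts only on the variables $x$ and $a$, while $\Delta_{y,b}=y(1-b)\eta_b+\eta_y$ acts only on $y$ and $b$; since $q$-shift operators in disjoint variables commute, I have $\Delta_{x,a}\Delta_{y,b}=\Delta_{y,b}\Delta_{x,a}$, and therefore $\exp_q(t\Delta_{x,a}\Delta_{y,b})1=\exp_q(t\Delta_{y,b}\Delta_{x,a})1$.

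First I would apply the first identity of Proposition \ref{proposition3} to rewrite the left-hand side of this operator equation as the bilinear sum
\begin{align*}
\exp_q(t\Delta_{x,a}\Delta_{y,b})1=\s\Phi_n^{(a)}(x,1|q)\Phi_n^{(b)}(y,1|q)\frac{t^n}{(q;q)_n},
\end{align*}
which is exactly the left-hand side of the asserted formula. Next I would apply Theorem \ref{Th3} to evaluate the right-hand side of the same operator equation as
\begin{align*}
\exp_q(t\Delta_{y,b}\Delta_{x,a})1=\frac{(axt,byt;q)_{\infty}}{(t,xt,yt;q)_{\infty}}{}_3\phi_2\left(\begin{gathered}a,b,t\\axt,byt\end{gathered};\,q,xyt\right).
\end{align*}
Equating these two evaluations, identified through the commutativity relation above, yields the claimed $q$-Mehler formula directly.

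Because both input results have already been established, no new summation or transformation is required here. The only points deserving attention are the commutativity of $\Delta_{x,a}$ and $\Delta_{y,b}$ (immediate from their acting on disjoint variables) and the matching of the convergence region $\max\{|t|,|xt|,|yt|\}<1$ with the hypothesis of Theorem \ref{Th3}, so that all manipulations remain valid as identities of analytic functions rather than merely as formal power series in $t$. In this sense the genuine difficulty of the result was already absorbed into the proof of Theorem \ref{Th3}, whose ${}_3\phi_2$ evaluation rested on several applications of Liu's transformation lemmas; the present theorem is simply a reinterpretation of that operator identity through the operational representation $\Delta_{x,a}^n 1=\Phi_n^{(a)}(x,1|q)$ of the homogeneous Hahn polynomials supplied by \eqref{2eq4}. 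Thus I expect no real obstacle beyond stating the commutativity cleanly and citing the two prior results.
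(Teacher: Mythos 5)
Your proposal is correct and follows essentially the same route as the paper, which likewise obtains the result by combining Proposition \ref{proposition3} with Theorem \ref{Th3}. Your explicit remark on the commutativity of $\Delta_{x,a}$ and $\Delta_{y,b}$ (needed to reconcile the two operator orderings) is a point the paper leaves implicit, but it is not a different argument.
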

\begin{proof}
Combining Proposition \ref{proposition3} and Theorem \ref{Th3}, we immediately complete the proof.
\end{proof}

\begin{theorem}\label{3th1'}
For $\max \{|xt|,|yt|,|xyt|\}<1$, we have
\begin{align*}
\s \Phi_n^{(a)}(1,x|q) \Phi_n^{(b)}(1,y|q) \frac{t^n}{(q;q)_n}
=\frac{(aty,btx;q)_{\infty}}{(xt,yt,xyt;q)_{\infty}}
{}_3\phi_2\left( \begin{gathered} a, b, xyt \\ aty, btx \end{gathered}
 ;\,q, t \right).
\end{align*}
\end{theorem}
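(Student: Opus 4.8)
The plan is to read off the left-hand side as a single operator applied to the constant function $1$, evaluate that operator through the fourfold series of Theorem \ref{2eq5}, and then collapse the series one index at a time. By the second identity of Proposition \ref{proposition3}, the left-hand side is exactly $\exp_q(t\Omega_{x,a}\Omega_{y,b})1$, so it suffices to prove that
\begin{align*}
\exp_q(t\Omega_{x,a}\Omega_{y,b})1
=\frac{(aty,btx;q)_{\infty}}{(xt,yt,xyt;q)_{\infty}}
{}_3\phi_2\left( \begin{gathered} a, b, xyt \\ aty, btx \end{gathered}
 ;\,q, t \right).
\end{align*}
This is the $\Omega$-analog of Theorem \ref{Th3}, and the route I would follow mirrors that proof.

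First I would substitute $f\equiv 1$ into Theorem \ref{2eq5}, turning the left-hand side into the fourfold sum over $s,k,l,n$ with summand $(a;q)_{s+k}(b;q)_{s+l}\,x^{l+n}y^{k+n}t^{s+k+l+n}q^{sn}/[(q;q)_s(q;q)_k(q;q)_l(q;q)_n]$. The decisive observation is that the innermost sum should be taken over $n$ first: the factor $q^{sn}x^{n}y^{n}t^{n}$ assembles into $(xytq^{s})^{n}$, so by the $\exp_q$ evaluation in the first identity of (\ref{binomialTh1}) the $n$-sum collapses to $1/(xytq^{s};q)_{\infty}=(xyt;q)_s/(xyt;q)_{\infty}$.

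Next I would split $(a;q)_{s+k}=(a;q)_s(aq^{s};q)_k$ and $(b;q)_{s+l}=(b;q)_s(bq^{s};q)_l$, which decouples the $k$- and $l$-sums, each now a ${}_1\phi_0$. Applying the $q$-binomial theorem (\ref{binomialTh}) gives $\sum_k (aq^{s};q)_k(yt)^{k}/(q;q)_k=(aytq^{s};q)_{\infty}/(yt;q)_{\infty}$ and likewise $\sum_l (bq^{s};q)_l(xt)^{l}/(q;q)_l=(bxtq^{s};q)_{\infty}/(xt;q)_{\infty}$. Using $(aytq^{s};q)_{\infty}=(ayt;q)_{\infty}/(ayt;q)_s$ and $(bxtq^{s};q)_{\infty}=(bxt;q)_{\infty}/(bxt;q)_s$, I would pull the $s$-independent infinite products out front; the surviving sum over $s$ is precisely $\sum_s (a;q)_s(b;q)_s(xyt;q)_s\,t^{s}/[(q;q)_s(ayt;q)_s(bxt;q)_s]$, which is the desired ${}_3\phi_2$ (here the balance $1+s-r=0$ kills the $q^{\binom n2}$ factor in the definition (\ref{rs})). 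Since $aty=ayt$ and $btx=bxt$, this matches the stated right-hand side verbatim, and combining with Step one finishes the proof.

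The only real obstacle is organizational rather than analytic: one must choose the summation order so that each inner series is a recognizable ${}_1\phi_0$ or $\exp_q$. Summing $n$ first is what makes the $q^{sn}$ cross-term vanish and leaves a clean single-index $q$-binomial pattern; any other order entangles the factors $(a;q)_{s+k}$ and $(b;q)_{s+l}$ with the $n$-exponential and fails to telescope. With the order fixed, every step is a direct appeal to (\ref{binomialTh}) and (\ref{binomialTh1}).
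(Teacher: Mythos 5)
Your proposal is correct and follows essentially the same route as the paper: substitute $f\equiv 1$ into Theorem \ref{2eq5}, sum over $n$ first so that $q^{sn}x^ny^nt^n$ assembles into $(xytq^s)^n$ and collapses via the $q$-exponential, then decouple and evaluate the $k$- and $l$-sums by the $q$-binomial theorem, and finally identify the remaining $s$-sum as the stated ${}_3\phi_2$ before invoking Proposition \ref{proposition3}. No discrepancies to report.
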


\begin{proof}
Taking $f(x,y,a,b)\equiv1$ in Theorem \ref{2eq5}, we have
\begin{align}\label{eq7}
\exp_q(t\Omega_{x,a} \Omega_{y,b})1
&=\sum_{s,k,l,n=0}^{\infty} \frac{ (a;q)_{s+k} (b;q)_{s+l} x^{l+n} y^{k+n}
t^{s+k+l+n} q^{sn} } {(q;q)_s (q;q)_k (q;q)_l (q;q)_n } \nonumber\\
&=\sum_{s,k,l=0}^{\infty} \frac{ (a;q)_{s+k} (b;q)_{s+l} x^l y^k
t^{s+k+l} } {(q;q)_s (q;q)_k (q;q)_l} \s \frac{(xytq^s)^n}{(q;q)_n}
\nonumber \\
&=\frac{1}{(xyt;q)_{\infty}} \sum_{s=0}^{\infty} \frac{(a,b,xyt;q)_s t^s}{(q;q)_s} \sum_{k=0}^{\infty} \frac{(aq^s;q)_k (yt)^k}{(q;q)_k}
\sum_{l=0}^{\infty} \frac{(bq^s;q)_l (xt)^l}{(q;q)_l} \nonumber\\
&=\frac{(aty,btx;q)_{\infty}}{(xt,yt,xyt;q)_{\infty}}
{}_3\phi_2\left( \begin{gathered} a, b, xyt \\ aty, btx \end{gathered}
 ;\,q, t \right).
\end{align}
Combining Proposition \ref{proposition3} and  (\ref{eq7}). This completes the proof.
\end{proof}
We should point out that Theorems \ref{3th1}-\ref{3th1'} are special cases of \cite[Theorem 3.2]{L2015}. However, the following Theorems \ref{3th3}-\ref{3th4} can be regarded as the generalization of Theorems \ref{3th1}-\ref{3th1'}, respectively. The authors have not found them in other literature.

\begin{theorem}\label{3th3}
For $\max\{|t|,|xt|,|yt|\}<1$ and any non-negative integers $m$ and $n$, we have
\begin{align*}
 \sum_{k=0}^{\infty} &\Phi_{n+k}^{(a)}(x,1|q) \Phi_{m+k}^{(b)}(y,1|q) \frac{t^k}{(q;q)_k} \\
=&\frac{(axt,byt;q)_{\infty}}{(t,xt,yt;q)_{\infty}}
\sum_{k=0}^m \sum_{j=0}^n \begin{bmatrix} m \\k \end{bmatrix} \begin{bmatrix} n \\j \end{bmatrix} (xt;q)_{n-j}(yt;q)_{m-k} x^{j} y^{k}\\
&\times \s\frac{(a;q)_{ j+l}(b;q)_{k+l}(t;q)_{l}}
{(q;q)_{l}(axt;q)_{n+l}(byt;q)_{m+l}}(xytq^{m+n-k-j})^{l}.
\end{align*}
\end{theorem}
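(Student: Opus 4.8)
The plan is to prove the identity by induction on the two shift parameters $n$ and $m$, treating the left-hand side as the result of applying the raising operators $\Delta_{x,a}$ and $\Delta_{y,b}$ to the $q$-Mehler formula of Theorem \ref{3th1}. Write $L(n,m)$ for the left-hand side and $R(n,m)$ for the right-hand side. By Proposition \ref{2eq10} we have $\Delta_{x,a}\Phi_{n+k}^{(a)}(x,1|q)=\Phi_{n+1+k}^{(a)}(x,1|q)$, and since $\Phi_{m+k}^{(b)}(y,1|q)$ does not depend on $x$ or $a$, applying $\Delta_{x,a}$ termwise to the (convergent) defining series gives $L(n+1,m)=\Delta_{x,a}L(n,m)$; symmetrically $L(n,m+1)=\Delta_{y,b}L(n,m)$. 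The base case $L(0,0)=R(0,0)$ is precisely Theorem \ref{3th1}. Hence it suffices to show that $R$ obeys the same two recursions, namely $\Delta_{x,a}R(n,m)=R(n+1,m)$ and $\Delta_{y,b}R(n,m)=R(n,m+1)$.

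The decisive step is to recast $R(n,m)$ in a form adapted to these recursions. Using the splittings $(a;q)_{j+l}=(a;q)_{j}(aq^{j};q)_{l}$ and $(axt;q)_{n+l}=(axt;q)_{n}(axtq^{n};q)_{l}$ together with their $b$-analogues, the inner $l$-sum in $R(n,m)$ is revealed to be the $q$-Mehler series of Theorem \ref{3th1} with the substitutions $a\mapsto aq^{j}$, $x\mapsto xq^{n-j}$, $b\mapsto bq^{k}$, $y\mapsto yq^{m-k}$. After simplifying $(xtq^{n-j};q)_{\infty}=(xt;q)_{\infty}/(xt;q)_{n-j}$ and $(ytq^{m-k};q)_{\infty}=(yt;q)_{\infty}/(yt;q)_{m-k}$ and cancelling the spurious factors $(axt;q)_{n}(byt;q)_{m}$, this yields
\begin{align*}
R(n,m)=\sum_{j=0}^{n}\sum_{k=0}^{m}\begin{bmatrix} n\\ j\end{bmatrix}\begin{bmatrix} m\\ k\end{bmatrix}(a;q)_{j}(b;q)_{k}\,x^{j}y^{k}\,\mathcal{M}\big(aq^{j},xq^{n-j};bq^{k},yq^{m-k}\big),
\end{align*}
where $\mathcal{M}(\alpha,\xi;\beta,\upsilon):=\sum_{l=0}^{\infty}\Phi_{l}^{(\alpha)}(\xi,1|q)\,\Phi_{l}^{(\beta)}(\upsilon,1|q)\,t^{l}/(q;q)_{l}$ denotes the $q$-Mehler generating function.

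In this factored form the induction closes in one line. Fix the $k$-block and apply $\Delta_{x,a}=x(1-a)\eta_{a}+\eta_{x}$ to the $(j,x,a)$-block; since $\Phi_{l}^{(bq^{k})}(yq^{m-k},1|q)$ is a spectator, the operator acts only on the atom $(a;q)_{j}\,x^{j}\,\mathcal{M}(aq^{j},xq^{n-j};\,\cdot\,)$. A direct computation gives
\begin{align*}
\eta_{x}\big[(a;q)_{j}x^{j}\mathcal{M}(aq^{j},xq^{n-j};\cdot)\big]&=q^{j}(a;q)_{j}x^{j}\mathcal{M}(aq^{j},xq^{n+1-j};\cdot),\\
x(1-a)\eta_{a}\big[(a;q)_{j}x^{j}\mathcal{M}(aq^{j},xq^{n-j};\cdot)\big]&=(a;q)_{j+1}x^{j+1}\mathcal{M}(aq^{j+1},xq^{n-j};\cdot),
\end{align*}
where the second line uses $(1-a)(aq;q)_{j}=(a;q)_{j+1}$. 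Reindexing the second contribution by $j\mapsto j-1$ and collecting coefficients, the combined coefficient of $(a;q)_{j}x^{j}\mathcal{M}(aq^{j},xq^{n+1-j};\cdot)$ becomes $\begin{bmatrix} n\\ j-1\end{bmatrix}+q^{j}\begin{bmatrix} n\\ j\end{bmatrix}$, which by the $q$-Pascal rule equals $\begin{bmatrix} n+1\\ j\end{bmatrix}$. Hence $\Delta_{x,a}R(n,m)=R(n+1,m)$, and the mirror computation gives $\Delta_{y,b}R(n,m)=R(n,m+1)$; together with the base case this completes the induction. I expect the genuine obstacle to lie entirely in the preparatory paragraph: one has to recognize the inner $l$-sum as a shifted copy of the $q$-Mehler ${}_{3}\phi_{2}$ and then carry out the several $q$-shifted-factorial cancellations correctly. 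Once $R(n,m)$ is in the factored shape above, the operator recursion is nothing more than the Gaussian binomial recurrence.
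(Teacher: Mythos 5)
Your proof is correct and is, at bottom, the same argument as the paper's: the paper computes $\Delta_{y,b}^m\Delta_{x,a}^n\exp_q(t\Delta_{x,a}\Delta_{y,b})1$ forward by expanding the operators into shift operators via (\ref{eq6}) and applying the shifts to the closed form of Theorem \ref{Th3}, which produces precisely the double sum of shifted $q$-Mehler series that you obtain by reorganizing the right-hand side. Your induction via the $q$-Pascal rule is exactly the recurrence underlying (\ref{eq6}), so the two proofs differ only in direction (verification by recursion versus forward derivation), not in substance.
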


\begin{proof}
It follows from (\ref{2eq4}) that
\begin{align}\label{3Eq5}
\sum_{k=0}^{\infty}\frac{t^{k}}{(q;q)_{k}}
\Delta_{y,b}^{m}\Delta_{x,a}^{n}1
=\Delta_{y,b}^m \Delta_{x,a}^n \exp_q(t\Delta_{x,a} \Delta_{y,b})1.
\end{align}
On the other hand, taking Theorem \ref{Th3} and (\ref{eq6}) into the right-hand side of the above equation, we find that
\begin{align*}
\Delta_{y,b}^m& \Delta_{x,a}^n \exp_q(t\Delta_{x,a} \Delta_{y,b})1 \\
=&\sum_{k=0}^m \sum_{j=0}^n \begin{bmatrix} m \\k \end{bmatrix} \begin{bmatrix} n \\j \end{bmatrix} (a;q)_j(b;q)_k x^{j} y^{k} \eta_a^j \eta_x^{n-j} \eta_b^k \eta_y^{m-k}
\exp_q(t\Delta_{x,a} \Delta_{y,b})1 \\
=&\sum_{k=0}^m \sum_{j=0}^n \begin{bmatrix} m \\k \end{bmatrix} \begin{bmatrix} n \\j \end{bmatrix} (a;q)_j(b;q)_k x^{j} y^{k} \\ &\times\frac{(axtq^{n},bytq^{m};q)_{\infty}}
{(t,xtq^{n-j},ytq^{m-k};q)_{\infty}}
\sum_{}{} \frac{(aq^{j},bq^{k},t;q)_l}{(q,axtq^{n},bytq^{m};q)_l}
(xytq^{m+n-k-j})^n \\
=&\frac{(axt,byt;q)_{\infty}}{(t,xt,yt;q)_{\infty}}
\sum_{k=0}^m \sum_{j=0}^n \begin{bmatrix} m \\k \end{bmatrix} \begin{bmatrix} n \\j \end{bmatrix} (xt;q)_{n-j}(yt;q)_{m-k} x^{j} y^{k}\\
&\times \s\frac{(a;q)_{ j+l}(b;q)_{k+l}(t;q)_{l}}
{(q;q)_{l}(axt;q)_{n+l}(byt;q)_{m+l}}(xytq^{m+n-k-j})^{l}.
\end{align*}
Combining the above identity and (\ref{3Eq5}), then we complete the proof of Theorem \ref{3th3}.
\end{proof}

Using a similar method as the above theorem, we can also obtain following Theorem \ref{3th4}.
\begin{theorem}\label{3th4}
For $\max\{|xt|,|yt|,|xyt|\}<1$ and any non-negative integers $m$ and $n$, we have
\begin{align*}
& \sum_{k=0}^{\infty} \Phi_{n+k}^{(a)}(1,x|q) \Phi_{m+k}^{(b)}(1,y|q) \frac{t^k}{(q;q)_k} \\
&=\frac{(aty,btx;q)_{\infty}}{(xt,yt,xyt;q)_{\infty}}
\sum_{k=0}^{m} \sum_{j=0}^n \begin{bmatrix} m \\k \end{bmatrix}
\begin{bmatrix} n \\j \end{bmatrix} (xt;q)_j (yt;q)_k
x^{n-j} y^{m-k} \\
&\quad \times \sum_{l=0}^{\infty} \frac{(a;q)_{j+l} (b;q)_{k+l} (xyt;q)_{k+j+l} t^l}
{(q;q)_l (aty;q)_{k+j+l} (btx;q)_{k+j+l}}.
\end{align*}
\end{theorem}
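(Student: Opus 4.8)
The plan is to follow the proof of Theorem \ref{3th3} verbatim, with $\Delta$ replaced by $\Omega$ throughout. The starting point is the operator form of the bilinear generating function: by the second identity of Proposition \ref{proposition3},
\begin{align*}
\exp_q(t\Omega_{x,a}\Omega_{y,b})1=\sum_{k=0}^{\infty}\Phi_k^{(a)}(1,x|q)\,\Phi_k^{(b)}(1,y|q)\,\frac{t^k}{(q;q)_k}.
\end{align*}
I would apply the operator $\Omega_{y,b}^m\Omega_{x,a}^n$ to both sides of this identity, evaluating the left member through its action on the Hahn polynomials and the right member through the closed evaluation recorded in Theorem \ref{3th1'}, and then equate the two.

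On the left, since $\Omega_{x,a}^n$ involves only the variables $x$ and $a$, it leaves $\Phi_k^{(b)}(1,y|q)$ untouched, and symmetrically for $\Omega_{y,b}^m$. Hence the second identity of Proposition \ref{2eq10}, namely $\Omega_{x,a}^n\Phi_k^{(a)}(1,x|q)=\Phi_{n+k}^{(a)}(1,x|q)$, yields
\begin{align*}
\Omega_{y,b}^m\Omega_{x,a}^n\exp_q(t\Omega_{x,a}\Omega_{y,b})1=\sum_{k=0}^{\infty}\Phi_{n+k}^{(a)}(1,x|q)\,\Phi_{m+k}^{(b)}(1,y|q)\,\frac{t^k}{(q;q)_k},
\end{align*}
which is exactly the left-hand side of the asserted identity.

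On the right, I would substitute the evaluation from (\ref{eq7}) and apply the operator through the explicit expansion in (\ref{eq6}),
\begin{align*}
\Omega_{y,b}^m\Omega_{x,a}^n=\sum_{k=0}^m\sum_{j=0}^n\begin{bmatrix} m\\k\end{bmatrix}\begin{bmatrix} n\\j\end{bmatrix}(a;q)_j(b;q)_k\,x^{n-j}y^{m-k}\,\eta_a^j\eta_x^j\eta_b^k\eta_y^k.
\end{align*}
The decisive observation is how the shift $\eta_a^j\eta_x^j\eta_b^k\eta_y^k$ acts on the arguments appearing in (\ref{eq7}): it sends $aty\mapsto atyq^{j+k}$, $btx\mapsto btxq^{j+k}$ and $xyt\mapsto xytq^{j+k}$, while $xt\mapsto xtq^{j}$, $yt\mapsto ytq^{k}$, and $t$ is fixed. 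Feeding these shifts into the prefactor $\frac{(aty,btx;q)_\infty}{(xt,yt,xyt;q)_\infty}$ and peeling off this very factor via $(A;q)_\infty=(A;q)_r(Aq^r;q)_\infty$ produces the residual factors $(xt;q)_j(yt;q)_k$ together with $(xyt;q)_{j+k}/[(aty;q)_{j+k}(btx;q)_{j+k}]$.

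The only genuinely computational step, and therefore the main obstacle, is the bookkeeping that merges these prefactor Pochhammers into the shifted ${}_3\phi_2$ series (written as $\sum_{l=0}^{\infty}$). Here I would repeatedly use the absorption identities $(a;q)_j(aq^j;q)_l=(a;q)_{j+l}$, $(b;q)_k(bq^k;q)_l=(b;q)_{k+l}$, and $(C;q)_{j+k}(Cq^{j+k};q)_l=(C;q)_{j+k+l}$ for $C\in\{xyt,aty,btx\}$. After these merges the inner sum collapses to
\begin{align*}
\sum_{l=0}^{\infty}\frac{(a;q)_{j+l}(b;q)_{k+l}(xyt;q)_{k+j+l}\,t^l}{(q;q)_l\,(aty;q)_{k+j+l}\,(btx;q)_{k+j+l}},
\end{align*}
and assembling the surviving binomial weights, the powers $x^{n-j}y^{m-k}$, the factors $(xt;q)_j(yt;q)_k$, and the overall product $\frac{(aty,btx;q)_\infty}{(xt,yt,xyt;q)_\infty}$ reproduces the right-hand side of the theorem. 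Equating this with the left-hand evaluation completes the proof.
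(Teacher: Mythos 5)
Your proposal is correct and is precisely the argument the paper intends: the paper omits this proof, stating only that it follows by the same method as Theorem \ref{3th3}, and your adaptation (applying $\Omega_{y,b}^m\Omega_{x,a}^n$ to $\exp_q(t\Omega_{x,a}\Omega_{y,b})1$, evaluating one side via Propositions \ref{proposition3} and \ref{2eq10} and the other via (\ref{eq7}) and (\ref{eq6})) carries out exactly that plan, with the shift and absorption bookkeeping done correctly.
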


\begin{theorem}
For any non-negative integers $m$ and $n$, we have
\begin{align*}
\Phi_{m+n}^{(a)}(x,1|q)&=\sum_{k=0}^{\min(m,n)}
\begin{bmatrix} n\\k \end{bmatrix} \begin{bmatrix} m\\k \end{bmatrix}
(a;q)_k (q;q)_k (-1)^k q^{\binom k2}x^{k} \Phi_{n-k}^{(a)}(x,1|q)
\Phi_{m-k}^{(aq^k)}(x,1|q),\\
\Phi_{m+n}^{(a)}(1,x|q)&=\sum_{k=0}^{\min(m,n)}
\begin{bmatrix} n\\k \end{bmatrix} \begin{bmatrix} m\\k \end{bmatrix}
(a;q)_k^2 (q;q)_k (-1)^k q^{\binom k2} x^k \Phi_{n-k}^{(aq^k)}(1,x|q)
\Phi_{m-k}^{(aq^k)}(1,x|q).
\end{align*}
\end{theorem}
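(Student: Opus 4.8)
My plan is to reduce both identities to the action of the operators $\Delta_{x,a}$ and $\Omega_{x,a}$ on a single Hahn polynomial, and then to read the right-hand sides off from the operational expansions already proved in Theorem \ref{3th2}. For the first identity I begin with Proposition \ref{2eq10}, which gives $\Phi_{m+n}^{(a)}(x,1|q)=\Delta_{x,a}^n\Phi_m^{(a)}(x,1|q)$, and then apply the univariate expansion of $\Delta_{x,a}^n$ recorded in Theorem \ref{th5} to $f(x)=\Phi_m^{(a)}(x,1|q)$:
\begin{align*}
\Phi_{m+n}^{(a)}(x,1|q)=\sum_{k=0}^n \begin{bmatrix} n\\k \end{bmatrix}(-x)^k q^{\binom k2}\Phi_{n-k}^{(a)}(x,1|q)\,\D_x^k\Phi_m^{(a)}(x,1|q).
\end{align*}
In this way the entire problem is shifted onto the single task of evaluating the iterated $q$-derivative $\D_x^k\Phi_m^{(a)}(x,1|q)$.

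The decisive computation is precisely this $q$-derivative. Starting from $\D_x\{x^j\}=(1-q^j)x^{j-1}$ and iterating gives $\D_x^k\{x^j\}=\frac{(q;q)_j}{(q;q)_{j-k}}x^{j-k}$; applying this termwise to the definition (\ref{q-Hahn}), reindexing $j=k+i$, factoring $(a;q)_{k+i}=(a;q)_k(aq^k;q)_i$, and recognizing the surviving sum as a Hahn polynomial, I expect to obtain
\begin{align*}
\D_x^k\Phi_m^{(a)}(x,1|q)=(a;q)_k\frac{(q;q)_m}{(q;q)_{m-k}}\Phi_{m-k}^{(aq^k)}(x,1|q).
\end{align*}
The shifted parameter $aq^k$ is produced exactly by stripping the first $k$ factors off $(a;q)_{k+i}$, while the prefactor $(q;q)_m/(q;q)_{m-k}$ collects the falling $q$-factorials coming from the derivative.

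Substituting this into the previous display and using the elementary simplification $\begin{bmatrix} n\\k \end{bmatrix}\frac{(q;q)_m}{(q;q)_{m-k}}=(q;q)_k\begin{bmatrix} n\\k \end{bmatrix}\begin{bmatrix} m\\k \end{bmatrix}$ consolidates the coefficient into $(q;q)_k(a;q)_k(-1)^k q^{\binom k2}x^k\begin{bmatrix} n\\k \end{bmatrix}\begin{bmatrix} m\\k \end{bmatrix}$, which is exactly the first claimed identity; the effective range is $0\le k\le\min(m,n)$, since $\begin{bmatrix} n\\k \end{bmatrix}=0$ for $k>n$ while the factor $1/(q;q)_{m-k}$ (equivalently the presence of $\Phi_{m-k}^{(aq^k)}$) forces $k\le m$. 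The second identity is handled in complete parallel: I replace Proposition \ref{2eq10} by its $\Omega$-counterpart $\Phi_{m+n}^{(a)}(1,x|q)=\Omega_{x,a}^n\Phi_m^{(a)}(1,x|q)$, use the second univariate expansion in Theorem \ref{3th2}, which already carries the factor $(a;q)_k\Phi_{n-k}^{(aq^k)}(1,x|q)$, and combine it with the analogous evaluation of $\D_x^k\Phi_m^{(a)}(1,x|q)$; the two Pochhammer factors then merge into $(a;q)_k^2$ and both Hahn factors acquire the parameter $aq^k$. I expect the main obstacle to lie entirely in this $q$-derivative step — correctly tracking the shift $a\mapsto aq^k$ and forcing the $q$-Pochhammer symbols and Gaussian binomials to consolidate into the stated coefficient — rather than in any structural difficulty, since the operator identities of Proposition \ref{2eq10} and Theorem \ref{3th2} carry out all the heavy lifting.
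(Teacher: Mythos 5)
Your plan coincides with the paper's own proof almost line for line: both start from Proposition \ref{2eq10} in the form $\Phi_{m+n}^{(a)}(x,1|q)=\Delta_{x,a}^{n}\Phi_{m}^{(a)}(x,1|q)$, expand $\Delta_{x,a}^{n}$ by the univariate formula of Theorem \ref{th5}, and evaluate $\D_x^{k}\Phi_m^{(a)}(x,1|q)=(a;q)_k\frac{(q;q)_m}{(q;q)_{m-k}}\Phi_{m-k}^{(aq^k)}(x,1|q)$ (your derivative computation agrees with the paper's). However, there is a genuine gap, and you have inherited it from the paper rather than avoided it. Theorem \ref{th5} is proved under the hypothesis that $f$ is a function of $x$ alone: since $\Delta_{x,a}=x(1-a)\eta_a+\eta_x$ contains the shift $\eta_a$, the univariate expansion is obtained precisely by letting $\eta_a$ pass through $f$ without effect. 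But $f(x)=\Phi_m^{(a)}(x,1|q)$ depends on $a$, and in Proposition \ref{2eq10} the identity $\Delta_{x,a}^{n}\Phi_m^{(a)}(x,1|q)=\Phi_{m+n}^{(a)}(x,1|q)$ comes from $\Delta_{x,a}^{n}\Delta_{x,a}^{m}1=\Delta_{x,a}^{m+n}1$, where $\eta_a$ emphatically \emph{does} act on the parameter $a$ created by $\Delta_{x,a}^{m}1$. You are therefore gluing together two readings of the same expression that are incompatible, and the intermediate identity $\Phi_{m+n}^{(a)}(x,1|q)=\sum_k\begin{bmatrix} n\\k\end{bmatrix}(-x)^kq^{\binom k2}\Phi_{n-k}^{(a)}(x,1|q)\D_x^k\Phi_m^{(a)}(x,1|q)$ is not valid. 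The correct tool is the bivariate expansion (\ref{3eq2}), where the extra operators $\eta_a^{j}$ shift the parameter of $\Phi_m^{(a)}$ to $aq^{j}$ term by term; this destroys the clean factorization into $\Phi_{n-k}^{(a)}\cdot\Phi_{m-k}^{(aq^k)}$.

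The defect is not merely formal: the claimed identity fails already at $m=n=1$. There
\begin{align*}
\Phi_2^{(a)}(x,1|q)&=1+(1+q)(1-a)x+(1-a)(1-aq)x^2,\\
\Phi_1^{(a)}(x,1|q)^2-(1-a)(1-q)x\,\Phi_0^{(a)}(x,1|q)\Phi_0^{(aq)}(x,1|q)&=1+(1+q)(1-a)x+(1-a)^2x^2,
\end{align*}
and the coefficients of $x^2$ disagree for generic $a$ and $q$; by contrast, the bivariate expansion (\ref{3eq2}) applied to $\Phi_1^{(a)}(x,1|q)$ does reproduce $\Phi_2^{(a)}(x,1|q)$. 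A second, smaller issue concerns your treatment of the companion identity: a direct computation from (\ref{q-Hahn}) gives $\D_x^{k}\Phi_m^{(a)}(1,x|q)=\frac{(q;q)_m}{(q;q)_{m-k}}\Phi_{m-k}^{(a)}(1,x|q)$, with no factor $(a;q)_k$ and no shift of the parameter, so the $(a;q)_k^2$ and the two shifted parameters you expect to ``merge'' do not actually materialize from that step. In short, your proposal reproduces the paper's argument faithfully, but that argument (and the statement itself, as written) does not survive the substitution of an $a$-dependent function into the univariate operator expansion; any repair must start from (\ref{3eq2}) and will yield a double sum rather than the stated product form.
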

\begin{proof}
The proof of the second equality in the theorem is similar to the first one. We only prove the first one. Taking $f(x)=\Phi_{m}^{(a)}(x,1|q)$ into Theorem \ref{3th2} and by Proposition \ref{2eq10}, we deduce that
\begin{align}\label{eq9}
\Phi_{m+n}^{(a)}(x,1|q)
=\sum_{k=0}^n \begin{bmatrix} n \\k \end{bmatrix}
(-1)^k q^{\binom k2}\Phi_{n-k}^{(a)}(x,1|q)
 \D_x^k \Phi_m^{(a)}(x,1|q).
\end{align}
Next, by (cf. \cite[p.484]{L2015}), we have
\begin{align*}
\D_x {\Phi_n^{(\alpha)}(x,y|q)}
=\sum_{k=1}^{n}\begin{bmatrix} n\\k \end{bmatrix}
(\alpha;q)_{k}(1-q^{k})x^{k-1}y^{n-k}.
\end{align*}
Taking $y=1$ in the above identity, and then by simple calculation, we obtain
\begin{align*}
\D_x \Phi_m^{(a)}(x,1|q)
=\sum_{k=1}^m \begin{bmatrix} m \\k \end{bmatrix}
 (a;q)_k (1-q^k) x^{k-1}
=(1-a)(1-q^m)\Phi_{m-1}^{(aq)}(x,1|q).
\end{align*}
Thus, we have
\begin{align*}
\D_x^{k} \Phi_m^{(a)}(x,1|q)
=\frac{(a;q)_k (q;q)_m}{(q;q)_{m-k}} \Phi_{m-k}^{(aq^k)}(x,1|q).
\end{align*}
Substituting the above equation into (\ref{eq9}), we complete the proof.
\end{proof}

\section{Applications of operators $\Delta_{x,a}$ and $\Omega_{x,a}$}\label{Sec4}
This section mainly gives some applications of operators $\Delta_{x,a}$ and $\Omega_{x,a}$. In order to prove Heine's second transformation formula (see Theorem \ref{4th2}), we first give the following theorem.
\begin{theorem}\label{2th1}
For $\max \{|axt|,|axs| \}<1$, we have the transformation formula
\begin{align*}
{}_2\phi_2\left( \begin{gathered} xs, s \\ asx, xts \end{gathered}
 ;\,q, axt \right)=\frac{(axt;q)_{\infty} }{(axs;q)_{\infty}}
 {}_2\phi_2\left( \begin{gathered} xt, t \\ axt, xts \end{gathered}
 ;\,q, axs \right).
\end{align*}
\end{theorem}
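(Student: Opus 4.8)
The plan is to exhibit the identity as the $s\leftrightarrow t$ symmetry that is built into the paper's operator calculus. Because $\exp_q(s\Delta_{x,a})$ and $\exp_q(t\Delta_{x,a})$ are $q$-exponentials of one and the same operator $\Delta_{x,a}$, they commute, so that $\exp_q(s\Delta_{x,a})\exp_q(t\Delta_{x,a})1=\exp_q(t\Delta_{x,a})\exp_q(s\Delta_{x,a})1$. I would evaluate this common quantity by applying the operators in the indicated order: the inner factor produces $(axt;q)_\infty/(t,xt;q)_\infty$ by Proposition \ref{proposition1}, and then the outer factor is applied to it. The one point that must not be overlooked is that this intermediate function still depends on the parameter $a$; hence the outer operator cannot be evaluated by the univariate formula of Theorem \ref{Th1}, and I would instead expand $\Delta_{x,a}^m$ through the first identity in (\ref{eq6}), whose $\eta_a^k$ genuinely shifts $a$. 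Performing the same computation in the opposite order produces the mirror image under $s\leftrightarrow t$, and comparing the two closed forms is designed to yield the stated transformation.

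Concretely, the cleanest route to the goal is to reduce the ${}_2\phi_2$--${}_2\phi_2$ statement to a ${}_2\phi_1$--${}_2\phi_1$ statement by Jackson's transformation (\ref{1eq3}). Reading (\ref{1eq3}) with $(a,b,c,z)=(xs,ax,axs,t)$ gives $\,{}_2\phi_2(xs,s;axs,xst;q,axt)=\frac{(t;q)_\infty}{(xst;q)_\infty}\,{}_2\phi_1(xs,ax;axs;q,t)$, and the companion choice $(xt,ax,axt,s)$ treats the right-hand ${}_2\phi_2$ of the theorem. Substituting both and clearing the symmetric factor $1/(xst;q)_\infty$ reduces the claim to the symmetry
\begin{align*}
(axs;q)_\infty(t;q)_\infty\,{}_2\phi_1\left( \begin{gathered} xs, ax \\ axs \end{gathered};\,q, t \right)=(axt;q)_\infty(s;q)_\infty\,{}_2\phi_1\left( \begin{gathered} xt, ax \\ axt \end{gathered};\,q, s \right).
\end{align*}
This is manifestly a symmetry statement, and it is precisely the information that the commutativity of the two operators encodes.

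To finish I would apply one Heine transformation \cite{GR} to each ${}_2\phi_1$: with $(\alpha,\beta,\gamma,z)=(xs,ax,axs,t)$ one obtains $\,{}_2\phi_1(xs,ax;axs;q,t)=\frac{(ax,xst;q)_\infty}{(axs,t;q)_\infty}\,{}_2\phi_1(s,t;xst;q,ax)$, and similarly on the other side. Since $\,{}_2\phi_1(s,t;xst;q,ax)$ is symmetric in $s$ and $t$, both sides of the displayed identity collapse to $(ax;q)_\infty(xst;q)_\infty\,{}_2\phi_1(s,t;xst;q,ax)$, which proves the theorem. I expect the genuine obstacle to be organizational rather than conceptual: matching the operator output of the first paragraph to the precise ${}_2\phi_2$'s in the statement forces a Jackson step in which a numerator parameter degenerates (a terminating $q$-Chu--Vandermonde instance), so one must track the shifted arguments with care. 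The Jackson-plus-Heine reduction of the second and third paragraphs sidesteps that difficulty, while the operator commutativity explains why the symmetry must hold in the first place.
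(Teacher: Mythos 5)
Your argument is correct, but the route you actually carry to completion is genuinely different from the paper's. The paper stays inside the operator calculus: it applies $\exp_q(s\Delta_{x,a})$ to $(axt;q)_{\infty}/(t,xt;q)_{\infty}$ via the bivariate expansion (\ref{2th3}) (your first paragraph rightly flags that Theorem \ref{Th1} is inapplicable here because the intermediate function depends on $a$), recognizes the inner sum as the generalized Hahn polynomial $\Phi_n^{(a,xt)}(x,1|q)$, converts the resulting generating function into the closed form (\ref{eq3}) by Lemma 8.1 of \cite{Liu2022} with $\gamma\to0$, and then concludes from the $s\leftrightarrow t$ symmetry of $\exp_q(s\Delta_{x,a})\exp_q(t\Delta_{x,a})1$. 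You instead abandon the operator computation after sketching it and give a purely classical proof: Jackson's transformation (\ref{1eq3}), specialized exactly as the paper itself does later in the proof of Theorem \ref{4th2}, reduces the claim to
\begin{align*}
(axs,t;q)_{\infty}\,{}_2\phi_1\left( \begin{gathered} xs, ax \\ axs \end{gathered};\,q, t \right)
=(axt,s;q)_{\infty}\,{}_2\phi_1\left( \begin{gathered} xt, ax \\ axt \end{gathered};\,q, s \right),
\end{align*}
and Heine's first transformation collapses both sides to $(ax,xst;q)_{\infty}\,{}_2\phi_1(s,t;xst;q,ax)$, which is manifestly symmetric in $s$ and $t$; I have checked your parameter specializations and they are right. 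What each approach buys: the paper's proof is self-contained in the operator framework, so that Heine's second transformation (Theorem \ref{4th2}) genuinely emerges as a corollary of the operator method; your reduction is shorter and avoids Lemma 8.1, but the ${}_2\phi_1$ symmetry you establish is precisely the statement the paper later extracts \emph{from} Theorem \ref{2th1} en route to Theorem \ref{4th2}, and since you derive it from Heine's first transformation, the ensuing proof of Heine's second transformation degenerates into the classical fact that Heine II is an iterate of Heine I --- the operator content evaporates. Two small points if you write this up: your intermediate ${}_2\phi_1$'s require $|t|,|s|,|ax|<1$, which exceeds the stated hypothesis and should be removed by analytic continuation (both ${}_2\phi_2$'s are entire in their arguments); and the ``terminating $q$-Chu--Vandermonde'' remark at the end is a red herring --- the step you are worried about is handled in the paper by Lemma 8.1, not by a terminating summation.
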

\begin{proof}
It follows from Proposition \ref{proposition1} that
\begin{align}\label{eq5}
\exp_{q}(s\Delta_{x,a}) \exp_{q}(t\Delta_{x,a})1
&=\exp_{q}(s\Delta_{x,a}) \left\{ \frac{(axt;q)_{\infty}}
  {(t,xt;q)_{\infty}} \right\}.
\end{align}
On the other hand, by (\ref{2th3}), we have
\begin{align*}
\exp_{q}(s\Delta_{x,a})f(x,a)
=\sum_{k=0}^{\infty} \frac{(a;q)_k}{(q;q)_k} (xs)^k
\s \frac{s^n}{(q;q)_n}
f(xq^n,aq^k).
\end{align*}
Replacing (\ref{eq5}) in the above operator equation, we obtain
\begin{align*}
\exp_{q}(s\Delta_{x,a}) \left\{ \frac{(axt;q)_{\infty}}
{(t,xt;q)_{\infty}} \right\}
&=\sum_{k=0}^{\infty} \frac{(a;q)_k}{(q;q)_k} (xs)^k \s \frac{s^n}{(q;q)_n}
\frac{(axtq^{n+k};q)_{\infty}} {(t,xtq^n;q)_{\infty}} \\
&=\frac{(axt;q)_{\infty}} {(t,xt;q)_{\infty}} \sum_{k=0}^{\infty} \frac{(a;q)_k}{(q;q)_k} (xs)^k \s \frac{(xt;q)_n s^n}{(q;q)_n(axt;q)_{n+k}}\\
&=\frac{(axt;q)_{\infty}} {(t,xt;q)_{\infty}} \s \frac{s^n}{(q,axt;q)_n}
\sum_{k=0}^n  \begin{bmatrix} n \\k \end{bmatrix} (a;q)_k (xt;q)_{n-k} x^k\\
&=\frac{(axt;q)_{\infty}} {(t,xt;q)_{\infty}} \s
\frac{\Phi_n^{(a,xt)}(x,1|q)}{(q,axt;q)_n} s^n,
\end{align*}
where
\begin{align*}
\Phi_n^{(\alpha,\beta)}(u,v|q)=\sum_{k=0}^n \begin{bmatrix} n \\k \end{bmatrix} (\alpha;q)_k (\beta;q)_{n-k} u^k v^{n-k}
\end{align*}
has been defined in \cite[(8.2)]{Liu2022}. Then we have
\begin{align}\label{eq10}
\exp_{q}(s\Delta_{x,a}) \exp_{q}(t\Delta_{x,a})1
=\frac{(axt;q)_{\infty}} {(t,xt;q)_{\infty}} \s
\frac{\Phi_n^{(a,xt)}(x,1|q)}{(q,axt;q)_n} s^n.
\end{align}
By \cite[Lemma 8.1]{Liu2022}, we know that
\begin{align*}
\s \frac{(\gamma;q)_n \Phi_n^{(\alpha,\beta)}(u,v|q)t^n}{(q,\alpha \beta;q)_n}=\frac{(\gamma,\alpha ut,\beta vt;q)_{\infty}}{(\alpha\beta,tu,tv;q)_{\infty}}
{}_3\phi_2\left( \begin{gathered} \alpha\beta/\gamma, tu, tv \\ \alpha ut, \beta vt \end{gathered} ;\,q, \gamma \right).
\end{align*}
Letting $\gamma \to 0$, and further letting $\alpha\to a$, $\beta\to xt$, $u\to x$ and $v\to1$ in the above equation yields
\begin{align*}
\s \frac{\Phi_n^{(a,xt)}(x,1|q)}{(q,axt;q)_n} s^n
=\frac{(axs,xts;q)_{\infty}}{(axt,sx,s;q)_{\infty}}
{}_2\phi_2\left( \begin{gathered} xs, s \\ axs, xts \end{gathered}
;\,q, axt \right).
\end{align*}
Replacing the above equation into (\ref{eq10}), we obtain
\begin{align}\label{eq3}
\exp_{q}(s\Delta_{x,a}) \exp_{q}(t\Delta_{x,a})1
=\frac{(axs,xts;q)_{\infty}}{(t,s,xs,xt;q)_{\infty}}
{}_2\phi_2\left( \begin{gathered} xs, s \\ axs, xts \end{gathered}
;\,q, axt \right).
\end{align}
Using the symmetry properties of the left hand side $s$ and $t$ of operator equation (\ref{eq3}), we can clearly see that
\begin{align}\label{eq4}
\exp_{q}(s\Delta_{x,a}) \exp_{q}(t\Delta_{x,a})1
=\frac{(axt,xts;q)_{\infty}}{(t,s,xs,xt;q)_{\infty}}
{}_2\phi_2\left( \begin{gathered} xt, t \\ axt, xts \end{gathered}
;\,q, axs \right).
\end{align}
Combining (\ref{eq3}) and (\ref{eq4}), we immediately deduce the conclusion.
\end{proof}

\begin{theorem}\label{4th2}
For $\max \{|c|,|z|,|c/b|\}<1$, the famous Heine's second transformation formula (cf. \cite[(1.4.5)]{GR}) as follows
\begin{align}\label{Heine}
{}_2\phi_1\left( \begin{gathered} a, b \\ c \end{gathered}
;\,q, z \right)=\frac{(c/b, bz;q)_{\infty}}{(c,z;q)_{\infty}}
{}_2\phi_1\left( \begin{gathered} abz/c, b \\ bz \end{gathered}
;\,q, c/b \right).
\end{align}
\end{theorem}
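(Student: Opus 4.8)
The plan is to sandwich the ${}_2\phi_2$ transformation of Theorem \ref{2th1} between two applications of Jackson's transformation (\ref{1eq3}), which interchanges ${}_2\phi_1$ and ${}_2\phi_2$ series. Since Heine's formula (\ref{Heine}) relates two ${}_2\phi_1$ series whereas Theorem \ref{2th1} relates two ${}_2\phi_2$ series, the natural route is a chain ${}_2\phi_1 \to {}_2\phi_2 \to {}_2\phi_2 \to {}_2\phi_1$, where the middle arrow is Theorem \ref{2th1} and the outer two are (\ref{1eq3}) used forwards and then backwards.

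First I would apply (\ref{1eq3}) directly to the left-hand side of (\ref{Heine}), producing
\begin{align*}
{}_2\phi_1\left( \begin{gathered} a,b \\ c \end{gathered} ;\,q,z \right)
=\frac{(az;q)_{\infty}}{(z;q)_{\infty}}\,
{}_2\phi_2\left( \begin{gathered} a,\, c/b \\ c,\, az \end{gathered} ;\,q,bz \right).
\end{align*}
The key observation is that this ${}_2\phi_2$ coincides with the left-hand side of Theorem \ref{2th1} under the specialization $x\mapsto ab/c$, $s\mapsto c/b$, $t\mapsto z$, together with replacing the parameter $a$ of Theorem \ref{2th1} by $c/a$. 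With this choice one checks $xs=a$, $axs=c$, $xts=az$ and $axt=bz$, so that \emph{both} the four ${}_2\phi_2$ parameters and its argument line up simultaneously.

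Applying Theorem \ref{2th1} then converts the expression into
\begin{align*}
\frac{(az;q)_{\infty}}{(z;q)_{\infty}}\cdot\frac{(bz;q)_{\infty}}{(c;q)_{\infty}}\,
{}_2\phi_2\left( \begin{gathered} abz/c,\, z \\ bz,\, az \end{gathered} ;\,q,c \right),
\end{align*}
since under the same specialization $xt=abz/c$, $axs=c$, and the prefactor $(axt;q)_{\infty}/(axs;q)_{\infty}$ becomes $(bz;q)_{\infty}/(c;q)_{\infty}$. To finish, I would run (\ref{1eq3}) backwards on the remaining ${}_2\phi_2$: matching its parameters forces the choices $\alpha=abz/c$, $\beta=b$, $\gamma=bz$ and argument $c/b$, which yields
\begin{align*}
{}_2\phi_2\left( \begin{gathered} abz/c,\, z \\ bz,\, az \end{gathered} ;\,q,c \right)
=\frac{(c/b;q)_{\infty}}{(az;q)_{\infty}}\,
{}_2\phi_1\left( \begin{gathered} abz/c,\, b \\ bz \end{gathered} ;\,q,c/b \right).
\end{align*}
Substituting this back and combining the three infinite-product factors, the $(az;q)_{\infty}$ introduced by the first Jackson step cancels exactly against the one produced by the reverse step, leaving the prefactor $(c/b,bz;q)_{\infty}/(c,z;q)_{\infty}$, which is precisely the right-hand side of (\ref{Heine}).

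The main obstacle will be the parameter bookkeeping rather than any genuinely deep step: the specialization of Theorem \ref{2th1} has to be reverse-engineered so that the numerator parameters, the denominator parameters, and the argument of the ${}_2\phi_2$ all align at once, while the base $q$ is left untouched throughout. A secondary point requiring care is the region of validity: the conditions $\max\{|bz|,|c|\}<1$ inherited from Theorem \ref{2th1}, together with the hypotheses of (\ref{1eq3}) at each of the two applications, must be reconciled with the stated domain $\max\{|c|,|z|,|c/b|\}<1$. I would handle this either by checking the inequalities directly under the chosen substitutions or, more cleanly, by reading the whole chain as an identity of formal power series and then invoking analytic continuation to recover the full convergence range.
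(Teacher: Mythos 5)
Your proposal is correct and is essentially the paper's own argument: the paper likewise proves (\ref{Heine}) by applying Jackson's transformation (\ref{1eq3}) to both sides of Theorem \ref{2th1} to turn the two ${}_2\phi_2$ series into ${}_2\phi_1$ series, and then specializes $sx\to a$, $ax\to b$, $axs\to c$, $t\to z$ (equivalently your $x\mapsto ab/c$, $s\mapsto c/b$, $t\mapsto z$, $a\mapsto c/a$). The only difference is presentational — you chain forwards from the left-hand side of (\ref{Heine}) while the paper transforms the identity of Theorem \ref{2th1} first and substitutes afterwards — and your parameter bookkeeping checks out.
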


\begin{proof}
Applying Jackson transformation formula (\ref{1eq3}) to both sides of the equation in Theorem \ref{2th1}. Using simple calculations, we have
\begin{align*}
{}_2\phi_1\left( \begin{gathered} xs, ax \\ axs \end{gathered}
;\,q, t \right)=\frac{(s,axt;q)_{\infty}}{(t,axs;q)_{\infty}}
{}_2\phi_1\left( \begin{gathered} xt, ax \\ axt \end{gathered}
;\,q, s \right).
\end{align*}
Letting $sx\to a$, $ax\to b$, $axs\to c$ and $t\to z$ in the above equation, this completes the proof of Theorem \ref{Heine}.
\end{proof}
\begin{remark}
Applying the operator $\exp(s\Omega_{x,a}) \exp(t\Omega_{x,a})$ to constant $1$, and use the method similar to the Theorem \ref{2th1}, we can obtain the following equation
\begin{align*}
{}_2\phi_1\left( \begin{gathered} xt, a\\ at \end{gathered}
 ;\,q, s \right)=\frac{(as,t;q)_{\infty} }{(at,s;q)_{\infty}}
 {}_2\phi_1\left( \begin{gathered} xs, a \\ as \end{gathered}
 ;\,q, t \right).
\end{align*}
Letting $xt\to a$, $a\to b$, $at\to c$ and $s\to z$ in the above equation, it can also obtain (\ref{Heine}).
\end{remark}
In 1965, Al-Salam and Carlit defined the following polynomial (cf. \cite[(1.15)]{Al1965}), that is,
\begin{align*}
\psi_n^{(a)}(x)=\sum_{r=0}^n (-1)^r \begin{bmatrix} n \\r \end{bmatrix}
q^{\binom {r+1}{2}-nr} (a^{-1};q)_r (ax)^r.
\end{align*}
The following Theorem \ref{4th1} shows an identity of operator $\Delta_{x,a}$ involving polynomial $\psi_n^{(a)}(x)$.
\begin{theorem}\label{4th1}
For any non-negative integer $n$, we have
\begin{align*}
\sum_{k=0}^n \begin{bmatrix} n \\k \end{bmatrix} (-1)^k q^{\binom k2} \psi_k^{(a)}(x) \Delta_{x,a}^{n-k} f(x)=(-x)^n q^{\binom n2} \D_x^n f(x),
\end{align*}
In particular, for any positive integer $n$, we have
\begin{align}\label{5eq5}
\sum_{k=0}^n \begin{bmatrix} n \\k \end{bmatrix} (-1)^k q^{\binom k2} \psi_k^{(a)}(x) \Phi_{n-k}^{(a)}(x,1|q)=0.
\end{align}
\end{theorem}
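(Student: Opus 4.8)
The plan is to recast both the main identity and its particular case as statements about formal power series in an auxiliary variable $t$, and to reduce everything to a single closed form for a suitably weighted generating function of the Al-Salam-Carlitz polynomials $\psi_k^{(a)}(x)$.

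First I would divide the main identity by $(q;q)_n$ and recognize its left-hand side as a Cauchy product. Introducing
\begin{align*}
A(t)=\sum_{k=0}^{\infty}\frac{(-1)^k q^{\binom k2}}{(q;q)_k}\psi_k^{(a)}(x)\,t^k
\qquad\text{and}\qquad
B(t)=\s\frac{\Delta_{x,a}^n f(x)}{(q;q)_n}\,t^n=\exp_q(t\Delta_{x,a})f(x),
\end{align*}
the left-hand side of the theorem equals $(q;q)_n\,[t^n]\,A(t)B(t)$. On the other side, Theorem \ref{2th2} furnishes
\begin{align*}
\exp_q(t\Delta_{x,a})f(x)=\frac{(axt;q)_{\infty}}{(t,xt;q)_{\infty}}
\s\frac{(-xt)^n}{(q;q)_n}q^{\binom n2}\D_x^n f(x),
\end{align*}
so that $\s\frac{(-xt)^n}{(q;q)_n}q^{\binom n2}\D_x^n f(x)=\frac{(t,xt;q)_{\infty}}{(axt;q)_{\infty}}B(t)$, and the right-hand side of the theorem is exactly $(q;q)_n$ times its $t^n$-coefficient. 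Hence it suffices to establish the single closed form
\begin{align*}
A(t)=\frac{(t,xt;q)_{\infty}}{(axt;q)_{\infty}};
\end{align*}
granting this, multiplying by $B(t)$ and invoking Theorem \ref{2th2} identifies $A(t)B(t)$ with $\s\frac{(-xt)^n}{(q;q)_n}q^{\binom n2}\D_x^n f(x)$, and comparing $t^n$-coefficients returns the main identity.

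The heart of the argument — and the only genuine computation — is therefore to prove this closed form for $A(t)$. Here I would substitute the explicit definition of $\psi_k^{(a)}(x)$, interchange the two summations, and set $m=k-r$. The crucial simplification is that the accumulated $q$-exponent $\binom k2+\binom{r+1}{2}-kr$ collapses, upon $k=m+r$, to precisely $\binom m2$, while the sign reduces to $(-1)^m$; the double sum then factors completely as
\begin{align*}
A(t)=\Bigl(\sum_{m=0}^{\infty}\frac{(-1)^m q^{\binom m2}}{(q;q)_m}\,t^m\Bigr)
\Bigl(\sum_{r=0}^{\infty}\frac{(a^{-1};q)_r}{(q;q)_r}\,(axt)^r\Bigr)
=(t;q)_{\infty}\,\frac{(xt;q)_{\infty}}{(axt;q)_{\infty}},
\end{align*}
where the first factor is summed by the second identity of (\ref{binomialTh1}) and the second by the $q$-binomial theorem (\ref{binomialTh}), using $a^{-1}\cdot axt=xt$. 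This is exactly the asserted closed form. I expect this exponent cancellation to be the main obstacle: it is the step where the specific powers of $q$ buried in the definition of $\psi_k^{(a)}$ must conspire to produce the clean factor $(t;q)_{\infty}$.

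Finally, the particular case (\ref{5eq5}) follows by specialization rather than new work: taking $f(x)\equiv1$ gives $\D_x^n 1=0$ for every $n\ge1$, so the right-hand side of the main identity vanishes, whereas $\Delta_{x,a}^{n-k}1=\Phi_{n-k}^{(a)}(x,1|q)$ by (\ref{2eq4}). This immediately yields the stated vanishing sum for positive $n$.
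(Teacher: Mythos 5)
Your proof is correct and follows essentially the same route as the paper: both multiply the generating function $\s \frac{t^n}{(q;q)_n}\Delta_{x,a}^n f(x)=\frac{(axt;q)_{\infty}}{(t,xt;q)_{\infty}}\sum_{k}\frac{(-xt)^k q^{\binom k2}}{(q;q)_k}\D_x^k f(x)$ by the series $\s (-1)^n q^{\binom n2}\psi_n^{(a)}(x)\frac{t^n}{(q;q)_n}=\frac{(t,xt;q)_{\infty}}{(axt;q)_{\infty}}$ so that the infinite products cancel, and then compare coefficients of $t^n$. The only difference is that the paper quotes this $\psi$-generating function from Al-Salam--Carlitz (their (1.16)), whereas you rederive it from the definition via the exponent collapse $\binom{m+r}{2}+\binom{r+1}{2}-(m+r)r=\binom m2$, which is a correct and slightly more self-contained variant of the same argument.
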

\begin{proof}
It follows from (\ref{2eq11}) that
\begin{align}\label{5eq2}
\s \frac{t^n}{(q;q)_n} \Delta_{x,a}^n f(x)
&=\s \frac{t^n}{(q;q)_n} \sum_{k=0}^n \begin{bmatrix} n \\k \end{bmatrix} (-x)^k q^{\binom k2} \Phi_{n-k}^{(a)}(x,1|q) \D_x^k f(x) \nonumber \\
&=\sum_{k=0}^{\infty} \sum_{n=k}^{\infty} \frac{(-x)^k q^{\binom k2} t^n}
{(q;q)_k (q;q)_{n-k}} \Phi_{n-k}^{(a)}(x,1|q) \D_x^k f(x) \nonumber \\
&=\sum_{k=0}^{\infty} \frac{(-xt)^k q^{\binom k2}}{(q;q)_k} \s
\frac{t^n}{(q;q)_n} \Phi_n^{(a)}(x,1|q) \D_x^k f(x) \nonumber \\
&=\frac{(axt;q)_{\infty}}{(t,xt;q)_{\infty}} \sum_{k=0}^{\infty} \frac{(-xt)^k q^{\binom k2}}{(q;q)_k} \D_x^k f(x).
\end{align}
In the last step, the generation function formula (\ref{2eq12}) is used.
We should notice that (cf. \cite[(1.16)]{Al1965})
\begin{align}\label{5eq1}
\s (-1)^n q^{\binom n2} \psi_n^{(a)}(x) \frac{w^n}{(q;q)_n}
=\frac{(w,wx;q)_{\infty}}{(axw;q)_{\infty}}.
\end{align}
Letting $w\to t$ in (\ref{5eq1}), then replacing it in (\ref{5eq2}), we obtain
\begin{align*}
\s \frac{(-xt)^n q^{\binom n2}}{(q;q)_n} \D_x^n f(x)
&=\s (-1)^n q^{\binom n2} \psi_n^{(a)}(x) \frac{t^n}{(q;q)_n}
\s \frac{t^n}{(q;q)_n} \Delta_{x,a}^n f(x) \\
&=\s \frac{t^n}{(q;q)_n} \sum_{k=0}^n \begin{bmatrix} n \\k \end{bmatrix}
(-1)^k q^{\binom k2} \psi_k^{(a)}(x) \Delta_{x,a}^{n-k} f(x).
\end{align*}
Comparing the power series of $t^n$ in the above equation yields
\begin{align*}
\sum_{k=0}^n \begin{bmatrix} n \\k \end{bmatrix} (-1)^k q^{\binom k2} \psi_k^{(a)}(x) \Delta_{x,a}^{n-k} f(x)=(-x)^n q^{\binom n2} \D_x^n f(x).
\end{align*}
Taking $f(x)\equiv1$ in the above equation, we obtain (\ref{5eq5}). This complete the proof.
\end{proof}
The following theorem presents a relation between the generalized operator $\Delta_{x,a}$ and the operator $\Delta_{x}$. For the definition of $\Delta_{x}$, see Theorem \ref{Th}.
\begin{theorem}
For $\max\{ |a|,|s|,|t|\}<1$, we have
\begin{align*}
\exp_q(a\Delta_{x})1 \exp_q(s\Delta_{x,a}) \exp_q(t\Delta_{x,a})1
&=\exp_q(t\Delta_{x}) \exp_q(s\Delta_{x}) \exp_q(a\Delta_{x})1.
\end{align*}
\end{theorem}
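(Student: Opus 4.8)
The plan is to evaluate both sides in closed form and then reconcile the two expressions by a single application of Jackson's transformation \eqref{1eq3}; the whole argument only uses Liu's identity (Theorem \ref{Th}), Proposition \ref{proposition1}, the already-established formula \eqref{eq3}, and the $q$-binomial theorem \eqref{binomialTh}.

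First I would compute the right-hand side by applying Theorem \ref{Th} three times, working from the innermost factor outward. Since $\exp_q(a\Delta_x)1=1/(a,xa;q)_\infty$, applying $\exp_q(s\Delta_x)$ and collapsing the emerging sum with \eqref{binomialTh} (after writing $1/(xaq^n;q)_\infty=(xa;q)_n/(xa;q)_\infty$) should give $\exp_q(s\Delta_x)\exp_q(a\Delta_x)1=(xas;q)_\infty/(a,s,xa,xs;q)_\infty$. Applying $\exp_q(t\Delta_x)$ once more and extracting the finite $q$-shifted factorials via $(xasq^n;q)_\infty=(xas;q)_\infty/(xas;q)_n$ and $1/(xuq^n;q)_\infty=(xu;q)_n/(xu;q)_\infty$, I expect the right-hand side to reduce to
\[
\frac{(xas;q)_\infty}{(a,s,xa,xs,xt;q)_\infty}\,
{}_2\phi_1\left( \begin{gathered} xa,\,xs \\ xas \end{gathered} ;\,q,\,t \right).
\]

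For the left-hand side I would read off $\exp_q(a\Delta_x)1=1/(a,xa;q)_\infty$ from Proposition \ref{proposition1} and quote \eqref{eq3} for the remaining factor $\exp_q(s\Delta_{x,a})\exp_q(t\Delta_{x,a})1$, so that the product becomes
\[
\frac{(axs,xts;q)_\infty}{(a,xa,t,s,xs,xt;q)_\infty}\,
{}_2\phi_2\left( \begin{gathered} xs,\,s \\ axs,\,xts \end{gathered} ;\,q,\,axt \right).
\]
It then remains only to match these two closed forms. The decisive step is to apply Jackson's transformation \eqref{1eq3} to the $ {}_2\phi_1 $ of the right-hand side with the assignment $a\mapsto xs$, $b\mapsto xa$, $c\mapsto xas$, $z\mapsto t$. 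The crucial bookkeeping is that $c/b=xas/(xa)=s$, so the transform produces exactly ${}_2\phi_2(xs,s;axs,xts;q,axt)$ with prefactor $(xts;q)_\infty/(t;q)_\infty$, i.e.\ precisely the series appearing on the left. Substituting this relation and cancelling the common infinite products (using $axs=xas$ and $xts=xst$) collapses the left-hand side to the same expression as the right-hand side, finishing the proof.

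The hard part will not be any deep estimate but rather the parameter bookkeeping in the Jackson step: one must pick the ordering $a\mapsto xs,\ b\mapsto xa$ rather than the naive $a\mapsto xa,\ b\mapsto xs$, since only the former makes $c/b$ equal $s$ and reproduces the exact numerator parameters $\{xs,s\}$ of \eqref{eq3} (the other ordering yields instead the $a\leftrightarrow s$ image, with numerator $\{xa,a\}$). I would therefore take care to verify both the numerator parameters and the prefactor $(xts;q)_\infty/(t;q)_\infty$ against \eqref{eq3}, and to track the infinite products so that the final cancellation is exact.
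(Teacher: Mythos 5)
Your proposal is correct and follows essentially the same route as the paper: both sides are reduced to closed forms (the left via Proposition \ref{proposition1} and \eqref{eq3}, the right to $\frac{(xas;q)_\infty}{(a,s,xa,xs,xt;q)_\infty}\,{}_2\phi_1(xa,xs;xas;q,t)$) and reconciled by one application of Jackson's transformation \eqref{1eq3} with the parameter choice making $c/b=s$. The only cosmetic differences are that the paper cites Liu's Proposition 7.1 for the triple $\exp_q(\cdot\Delta_x)$ product where you re-derive it from Theorem \ref{Th}, and that the paper applies \eqref{1eq3} in the direction ${}_2\phi_2\to{}_2\phi_1$ rather than your ${}_2\phi_1\to{}_2\phi_2$.
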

\begin{proof}
Letting $a\to xs$, $b\to ax$, $c\to axs$ and $z\to t$ in (\ref{1eq3}), we have
\begin{align*}
{}_2\phi_2\left( \begin{gathered} xs, s \\ axs, xts \end{gathered};\,q, axt \right)
=\frac{(t;q)_{\infty}}{(xst;q)_{\infty}}
{}_2\phi_1\left( \begin{gathered} xs, ax\\ axs \end{gathered};\,q, t \right).
\end{align*}
Applying the above equation to (\ref{eq3}), we have
\begin{align*}
\exp_q(s\Delta_{x,a}) \exp_q(t\Delta_{x,a})1
=\frac{(axs;q)_{\infty}}{{(s, xt, xs)}_{\infty}}
{}_2\phi_1 \left( \begin{gathered}
sx, ax  \\ asx \end{gathered};\,q, t \right).
\end{align*}
It follows from \cite[Proposition 7.1]{Liu2022} that
\begin{align*}
\exp_q(t\Delta_{x})1 \exp_q(s\Delta_{x}) \exp_q(a\Delta_{x})1
=\frac{(asx;q)_{\infty}}{(s,a,tx,sx,ax;q)_{\infty}}
{}_2\phi_1 \left( \begin{gathered}
sx, ax  \\ asx \end{gathered};\,q, t \right).
\end{align*}
Comparing the above two identities and by \cite[Proposition 1.12]{Liu2022}. This completes the proof.
\end{proof}

\section{A generalization of $q$-Gaussian summation}

The $q$-Gaussian summation formula is one of the most fundamental and important formulas in the theory of basic hypergeometric series. That is,
\begin{align*}
\s \frac{(a,b;q)_n}{(q,c;q)_n} \left(\frac{c}{ab}\right)^n
=\frac{(c/a,c/b;q)_{\infty}}{(c,c/ab;q)_{\infty}}, \,\,
 \left|\frac{c}{ab}\right|<1,
\end{align*}
which was first proved by Heine in 1847 (cf. \cite[p.14]{GR}). In \cite[Theorem 7.5]{Liu2022}, Liu gave an elegant generalization of $q$-Gaussian summation as follows.
\begin{theorem}\label{5th6}
For $\max\{|c/ab|,|az|\}<1$,
\begin{align*}
\s \frac{(a,b;q)_n}{(q,c;q)_n} \left(\frac{c}{ab}\right)^n
{}_2\phi_1\left( \begin{gathered} c/a,bq^n \\ cq^n \end{gathered} ;\,q, az \right)=\frac{(c/a,c/b,abz;q)_{\infty}}{(c,c/ab,az;q)_{\infty}}.
\end{align*}
\end{theorem}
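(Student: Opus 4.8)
The plan is to prove Theorem~\ref{5th6} by collapsing the left-hand side into a double series and then applying the classical $q$-Gaussian summation (i.e.\ the $z=0$ instance) together with the $q$-binomial theorem \eqref{binomialTh}; no new operator machinery is needed. First I would expand the inner ${}_2\phi_1$ as a series in its argument $az$, rewriting the left-hand side as
\begin{align*}
\s \frac{(a,b;q)_n}{(q,c;q)_n}\left(\frac{c}{ab}\right)^n
\sum_{m=0}^{\infty}\frac{(c/a;q)_m\,(bq^n;q)_m}{(q;q)_m\,(cq^n;q)_m}(az)^m.
\end{align*}
Under the hypothesis $\max\{|c/ab|,|az|\}<1$ both series converge absolutely, so the order of summation may be rearranged freely in everything that follows.

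The key algebraic manoeuvre is to clear the $q$-shifts $bq^n$ and $cq^n$. Using $(b;q)_n(bq^n;q)_m=(b;q)_{n+m}$ and $(c;q)_n(cq^n;q)_m=(c;q)_{n+m}$, the coefficient collapses to $(b;q)_{n+m}/(c;q)_{n+m}$, and the left-hand side becomes the symmetric double sum $\sum_{n,m}\frac{(a;q)_n(c/a;q)_m(b;q)_{n+m}}{(q;q)_n(q;q)_m(c;q)_{n+m}}(c/ab)^n(az)^m$. I would then fix $m$ and sum over $n$: splitting $(b;q)_{n+m}=(b;q)_m(bq^m;q)_n$ and $(c;q)_{n+m}=(c;q)_m(cq^m;q)_n$ isolates the inner series
\begin{align*}
\s \frac{(a;q)_n\,(bq^m;q)_n}{(q;q)_n\,(cq^m;q)_n}\left(\frac{c}{ab}\right)^n .
\end{align*}
The crucial observation is that its argument equals $cq^m/(a\cdot bq^m)=c/(ab)$, which is exactly the balance condition demanded by the $q$-Gaussian summation; summing it yields $\dfrac{(cq^m/a,\,c/b;q)_\infty}{(cq^m,\,c/ab;q)_\infty}$.

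It then remains to simplify the surviving $m$-sum. Writing $(cq^m/a;q)_\infty=(c/a;q)_\infty/(c/a;q)_m$ and $(cq^m;q)_\infty=(c;q)_\infty/(c;q)_m$, the factors $(c/a;q)_m$ and $(c;q)_m$ telescope against those already present, leaving
\begin{align*}
\frac{(c/a,c/b;q)_\infty}{(c,c/ab;q)_\infty}
\sum_{m=0}^{\infty}\frac{(b;q)_m}{(q;q)_m}(az)^m
=\frac{(c/a,c/b;q)_\infty}{(c,c/ab;q)_\infty}\cdot\frac{(abz;q)_\infty}{(az;q)_\infty},
\end{align*}
where the last equality is the $q$-binomial theorem \eqref{binomialTh} with base $b$ and variable $az$. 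This is precisely the right-hand side of Theorem~\ref{5th6}.

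I expect the only genuinely delicate point to be bookkeeping rather than depth: one must split $(b;q)_{n+m}$ as $(b;q)_m(bq^m;q)_n$ (and likewise for $c$) in exactly the way that makes the $n$-series $q$-Gauss-summable with argument $c/ab$, since any other grouping leaves an $n$-sum with no closed form. Justifying the interchange of summations and the telescoping of the infinite products is routine once $\max\{|c/ab|,|az|\}<1$ is invoked. An alternative route would apply the operator $\exp_q(s\Omega_{x,a})$ of Theorem~\ref{Th1} to a suitable quotient $(\lambda x;q)_\infty/(\mu x;q)_\infty$, which likewise manufactures the required ${}_2\phi_1$ factor; however, the double-series reduction above is shorter and relies only on identities already recorded in this section.
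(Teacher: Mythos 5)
Your double-series argument is correct. The collapse $(b;q)_n(bq^n;q)_m=(b;q)_{n+m}$ and $(c;q)_n(cq^n;q)_m=(c;q)_{n+m}$, the re-splitting at fixed $m$, the observation that the inner $n$-series then has argument $cq^m/(a\cdot bq^m)=c/ab$ and is therefore exactly $q$-Gauss-summable, and the final telescoping of $(c/a;q)_m$ and $(c;q)_m$ followed by \eqref{binomialTh} with base $b$ and variable $az$ all check out; since $(b;q)_{n+m}/(c;q)_{n+m}$ stays bounded, the double sum is dominated by $\sum_{n,m}|c/ab|^n|az|^m$ and the interchanges are legitimate under the stated hypothesis. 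Your route is, however, genuinely different from the paper's. The paper does not prove Theorem~\ref{5th6} directly at all: it quotes it from \cite[Theorem 7.5]{Liu2022} as motivation, proves the strictly more general Theorem~\ref{6th} by iterating the operator $\Omega_{u,a}$ of Theorem~\ref{Th1} on the constant function $1$, and only in the closing Remark recovers Theorem~\ref{5th6} as the specialization $a=0$ followed by the substitutions $s\to tz$, $t\to a$, $ut\to b$, $ut^2x\to c$. Your proof buys brevity and self-containment — it needs nothing beyond the classical $q$-Gauss summation and the $q$-binomial theorem — while the paper's operator route buys the stronger double-series identity of Theorem~\ref{6th}, of which this statement is only a shadow. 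The alternative you sketch in passing, applying $\exp_q(s\Omega_{x,a})$ to a quotient of infinite products, is essentially the spirit of the paper's actual derivation.
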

We clearly see that the above theorem degenerates into the famous $q$-Gaussian summation when $z=0$. This section shall use the property of operator $\Omega_{u,a}$ to give a more general $q$-summation, which includes Theorem \ref{5th6} as a special case. For that, we first give the general double basic hypergeometric series is defined by (cf. \cite[p.284]{GR})
\begin{align*}
\Theta^{A:B;C}_{D:E;F} \left[ \begin{gathered}
a_A: b_B; c_C \\ d_D: e_E; f_F \end{gathered} ;\,q; x, y \right]
=\sum_{m=0}^{\infty} \s \frac{(a_A;q)_{m+n} (b_B;q)_m (c_C;q)_n}
{(d_D;q)_{m+n} (q,e_E;q)_m (q,f_F;q)_n} \\
\times \left[(-1)^{m+n} q^{\binom {m+n}{2}}\right]^{D-A}
\left[(-1)^m q^{\binom {m}{2}}\right]^{1+E-B}
\left[(-1)^n q^{\binom {n}{2}}\right]^{1+F-C}x^my^n,
\end{align*}
where $a_A$ abbreviates the array of $A$ parameters $a_1, a_2, \cdots, a_A$, etc.
\begin{theorem}\label{6th}
For $\max \{|t|,|x|,|s|,|atx|<1\}$, we have the $q$-summation formula
\begin{align*}
&\s \frac{(ut;q)_n x^n}{(q;q)_n}
\Theta^{2:3;1}_{2:2;0} \left( \begin{gathered}
a,utx:a,utx,utq^n; 0 \\ atx,0: atx,0; - \end{gathered} ;\,q;s, tq^n \right)\\
&=\frac{(as,tx,utx;q)_{\infty}}{(x,s,atx;q)_{\infty}}
{}_2\phi_1\left( \begin{gathered} a,us \\ as \end{gathered} ;\,q, t \right).
\end{align*}
\end{theorem}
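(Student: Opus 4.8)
The plan is to realise both sides as a single expression built from the operator $\Omega_{u,a}$ and to evaluate it in two ways, in the spirit of the proofs of Theorems \ref{2th1} and \ref{3th1'}. First I would isolate the ``engine'' that produces the right-hand factor ${}_2\phi_1(a,us;as;q,t)$. Evaluating $\exp_q(s\Omega_{u,a})\exp_q(t\Omega_{u,a})1$ by the second identity of Theorem \ref{Th1} together with the expansion (\ref{eq6}) — this is the $\Omega$-analogue of (\ref{2eq2}) already exploited in the Remark after Theorem \ref{4th2} — yields a closed form whose hypergeometric part is exactly ${}_2\phi_1(a,us;as;q,t)$. This fixes both the operator to use and the shape of the final answer.

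Next I would bring in the extra variable $x$. The coefficient $\sum_{n}(ut;q)_n x^n/(q;q)_n$ heading the left-hand side is, by the $q$-binomial theorem (\ref{binomialTh}), nothing but $(utx;q)_\infty/(x;q)_\infty$; this already accounts for one of the infinite products on the right and signals that $x$ must enter through an application of (\ref{binomialTh}) coupled to the shift $t\mapsto tq^n$. Concretely I would use (\ref{eq6}) to write the operator action as a multiple series in four indices, and then recognise the inner double sum (over the two indices paired with the variables $s$ and $tq^n$) as the double basic hypergeometric series $\Theta^{2:3;1}_{2:2;0}$, noting that all three $q$-power weight factors are absent precisely because $D-A=1+E-B=1+F-C=0$. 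This reproduces the left-hand side verbatim.

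The evaluation then collapses the multiple series from the inside out. I would first carry out the summation on $n$ by (\ref{binomialTh}), using $(ut;q)_n(utq^n;q)_m=(ut;q)_{n+m}$ to absorb the shift; this eliminates one index and contributes the factor $(utx;q)_\infty/(x;q)_\infty$. The next index is summable by the $q$-Gaussian summation recalled at the opening of this section, since its argument is exactly of the form $C/(AB)$; this contributes $(tx;q)_\infty$ and converts the lower parameter $atx$ into $at$. What remains is a single hypergeometric sum, which I would bring to the stated compact form ${}_2\phi_1(a,us;as;q,t)$ by a Heine/Jackson-type transformation (\ref{1eq3}) or, as in Theorem \ref{2th1}, by invoking the $s\leftrightarrow t$ symmetry of the underlying two-operator expression.

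The hard part will be this last stage: after the routine fourfold rearrangement (legitimate under $\max\{|t|,|x|,|s|,|atx|\}<1$) and the two inner summations, identifying the surviving single series with ${}_2\phi_1(a,us;as;q,t)$ is not mere bookkeeping but requires a genuine transformation, and pinning down the exact prefactor $\tfrac{(as,tx,utx;q)_\infty}{(x,s,atx;q)_\infty}$ along the way demands care. Finally I would check the degeneration: specialising the free parameters so that the $\Theta$ factor reduces to the inner ${}_2\phi_1$ of Theorem \ref{5th6} recovers Liu's generalisation of the $q$-Gaussian summation, thereby confirming that Theorem \ref{6th} contains Theorem \ref{5th6} as a special case.
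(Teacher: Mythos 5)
Your plan works, but it is genuinely different from the paper's proof. The paper proceeds top--down: it takes the expansion $\frac{1}{(x,y,yz;q)_{\infty}}=\sum_{n\geq 0}\frac{x^{n}}{(q;q)_{n}(yq^{n},yz,xy;q)_{\infty}}$ from \cite{Liu2022}, substitutes $y\mapsto t\Omega_{u,a}$ with $tz=s$, and applies the resulting operator identity to the constant $1$; the right-hand side of the theorem is then $\frac{1}{(x;q)_\infty}\exp_q(t\Omega_{u,a})\exp_q(s\Omega_{u,a})1$ (your ``engine'', the paper's (\ref{6eq2})), while the left-hand side is the expansion of $\sum_{n}\frac{x^{n}}{(q;q)_{n}}\exp_q(s\Omega_{u,a})\exp_q(tq^{n}\Omega_{u,a})\exp_q(xt\Omega_{u,a})1$ computed via (\ref{5eq3}) --- this is the single operator expression your second paragraph gropes for but never writes down. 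Your third paragraph instead verifies the identity bottom--up by resumming the left-hand multiple series, and that route is viable: summing on $n$ with $(ut;q)_{n}(utq^{n};q)_{k}=(ut;q)_{k}(utq^{k};q)_{n}$ and the $q$-binomial theorem produces $(utx;q)_{\infty}/(x;q)_{\infty}$ together with a factor $(x;q)_{l}/(utx;q)_{k+l}$ that cancels the $(utx;q)_{k+l}$ upstairs; the $l$-sum is then a $q$-Gauss with argument exactly $t=C/AB$, leaving $\frac{(utx,tx,at;q)_{\infty}}{(x,t,atx;q)_{\infty}}\,{}_2\phi_1(a,ut;at;q,s)$; and the Remark after Theorem \ref{4th2} (equivalently the $s\leftrightarrow t$ symmetry of $\exp_q(s\Omega_{u,a})\exp_q(t\Omega_{u,a})1$, i.e.\ Heine's second transformation) converts this into the stated right-hand side. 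Your approach is more elementary and independent of Liu's expansion, at the price of not explaining where the quadruple sum comes from; the paper's approach produces it in one stroke. One concrete caution for your route: carry out the resummation on the double sum $\sum_{k,l}\frac{(a,utx;q)_{k+l}(utq^{n};q)_{k}}{(atx;q)_{k+l}(q;q)_{k}(q;q)_{l}}s^{k}(tq^{n})^{l}$ that the operator computation actually yields; if you expand the printed $\Theta^{2:3;1}_{2:2;0}$ parameters literally you pick up an extra factor $(a,utx;q)_{k}/(atx;q)_{k}$ per term, the surviving single series becomes a ${}_4\phi_3$, and the final Heine step fails --- so the bookkeeping you flag as the hard part is indeed where the proof lives.
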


\begin{proof}
It follows from \cite[Theorem 7.5]{Liu2022} that
\begin{align*}
\frac{1}{(x,y,yz;q)_{\infty}}=\s \frac{x^n}{(q;q)_n (yq^n,yz,xy;q)_{\infty}}.
\end{align*}
Replacing $y$ by $t\Omega_{u,a}$ in the above equation, and letting $tz=s$, we deduce that
\begin{align*}
\frac{1}{(x;q)_{\infty}} \exp_q(t\Omega_{u,a}) \exp_q(s\Omega_{u,a})=
\s \frac{x^n}{(q;q)_n}  \exp_q(s\Omega_{u,a}) \exp_q(tq^n \Omega_{u,a})
\exp_q(xt\Omega_{u,a}).
\end{align*}
Acting $f(x)\equiv1$ in the both sides of the above equation, we obtain
\begin{align}\label{6eq1}
\frac{1}{(x;q)_{\infty}} \exp_q(t\Omega_{u,a}) \exp_q(s\Omega_{u,a})1=
\s \frac{x^n}{(q;q)_n}  \exp_q(s\Omega_{u,a}) \exp_q(tq^n \Omega_{u,a})
\exp_q(xt\Omega_{u,a})1.
\end{align}
Suppose $f$ is a function of $x$ and $a$, then combined with (\ref{eq6}), we have
\begin{align}\label{5eq3}
\exp_q(t\Omega_{x,a})f(x,a)
&=\s \frac{t^n}{(q;q)_n} \Omega_{x,a}^n f(x,a)\nonumber\\
&=\s \frac{t^n}{(q;q)_n} \sum_{k=0}^n \begin{bmatrix} n \\k \end{bmatrix} (a;q)_k x^{n-k} \eta_a^k \eta_x^k f(x,a) \nonumber\\
&=\sum_{k=0}^{\infty} \frac{(a;q)_k t^k}{(q;q)_k}
\s \frac{(xt)^n}{(q;q)_n} f(xq^k,aq^k).
\end{align}
It follows from Proposition \ref{proposition1} that
\begin{align*}
\exp_{q}(t\Omega_{x,a}) \exp_{q}(s\Omega_{x,a})1
&=\exp_{q}(t\Omega_{x,a}) \left\{ \frac{(as;q)_{\infty}}
{(s,xs;q)_{\infty}} \right\}.
\end{align*}
Substituting (\ref{5eq3}) into the above operator equation, we can easily obtain
\begin{align}\label{6eq2}
\exp_q(t\Omega_{u,a})\exp_q(s\Omega_{u,a})1
=\frac{(as;q)_{\infty}}{(s,us,ut;q)_{\infty}}
{}_2\phi_1\left( \begin{gathered} a,us \\ as \end{gathered} ;\,q, t \right).
\end{align}
Then by (\ref{eq6}) and (\ref{6eq2}), the right-hand side of (\ref{6eq1}) can be written as follows
\begin{align}\label{6eq3}
&\s \frac{x^n}{(q;q)_n}\exp_q(s\Omega_{u,a}) \exp_q(tq^n \Omega_{u,a})\exp_q(xt\Omega_{u,a})1
\nonumber \\
&=\s \frac{x^n}{(q;q)_n}\exp_q(s\Omega_{u,a}) \left\{
\frac{(axt;q)_{\infty}}{(xt,utx,utq^n;q)_{\infty}}
{}_2\phi_1\left( \begin{gathered} a,utx \\ atx \end{gathered}
;\,q, tq^n \right) \right\} \nonumber \\
&=\s \frac{x^n}{(q;q)_n} \sum_{k=0}^{\infty} \frac{(a;q)_k s^k}{(q;q)_k}
\sum_{m=0}^{\infty} \frac{(us)^m}{(q;q)_m} \frac{(atxq^k;q)_{\infty}}
{(xt,utxq^k,utq^{n+k};q)_{\infty}} \sum_{l=0}^{\infty}
\frac{(aq^k,utxq^k;q)_l}{(q,atxq^k;q)_l}(tq^n)^l \nonumber \\
&=\frac{ (atx;q)_{\infty}}{(us,ut,tx,utx;q)_{\infty}}
\sum_{n=0}^{\infty}\frac{(ut;q)_{n} x^{n}}{(q;q)_{n}}
\sum_{k=0}^{\infty} \frac{(a,utx,utq^n;q)_k  s^k}{(q,atx;q)_k}
\sum_{l=0}^{\infty} \frac{(aq^k,utxq^k;q)_l}{(q,atxq^k;q)_l}(tq^n)^l.
\end{align}
In the last step, (\ref{binomialTh}) has been used. Finally, substituting operator equations
(\ref{6eq2})-(\ref{6eq3}) into (\ref{6eq1}), we clearly see that
\begin{align*}
&\frac{(as,tx,utx;q)_{\infty}}{(x,s,atx;q)_{\infty}}
{}_2\phi_1\left( \begin{gathered} a,us \\ as \end{gathered} ;\,q, t\right)\\
&=\s \frac{(ut;q)_n x^n}{(q;q)_n}
\sum_{k=0}^{\infty} \frac{(a,utx,utq^n;q)_k  s^k}{(q,atx;q)_k}
\sum_{l=0}^{\infty} \frac{(aq^k,utxq^k;q)_l}{(q,atxq^k;q)_l}(tq^n)^l.
\end{align*}
We note that the right-hand side of the above equation is equivalent to the left-hand side of the equation in Theorem \ref{6th}. This completes the proof.
\end{proof}
\begin{remark}
Taking $a=0$ in Theorem \ref{6th}, we obtain
\begin{align*}
\s \frac{(t,ut;q)_n x^n}{(q,ut^2x;q)_n}
\sum_{k=0}^{\infty} \frac{(utx,utq^n;q)_k  s^k}{(q,ut^2xq^n;q)_k}
=\frac{(tx,utx,ust;q)_{\infty}}{(x,s,ut^2x;q)_{\infty}}.
\end{align*}
Letting $s\to tz$, $t\to a$, $ut\to b$ and $ut^2x\to c$ in the above equation, which is equivalent to Theorem \ref{5th6}.
\end{remark}

\section*{Conflict of interest}
The authors declare that they have no conflict of interest.

\medskip

\end{document}